\newtheorem{thm}{Theorem}[section]
\newtheorem{lem}[thm]{Lemma}
\newtheorem{defin}[thm]{Definition}
\newtheorem{rem}[thm]{Remark}
\numberwithin{equation}{section}
\begin{document}

\begin{frontmatter}

\title{Homogeneous fractional integral operators on weighted Lebesgue, Morrey and Campanato spaces}

\author{Jingliang Du and Hua Wang
\footnote{E-mail address: 418419773@qq.com, wanghua@pku.edu.cn.}}
\address{School of Mathematics and System Sciences, Xinjiang University,\\
Urumqi 830046, P. R. China\\
\textbf{Dedicated to the memory of Li Xue}}

\begin{abstract}
Let $0<\alpha<n$ and $T_{\Omega,\alpha}$ be the homogeneous fractional integral operator which is defined by
\begin{equation*}
T_{\Omega,\alpha}f(x):=\int_{\mathbb R^n}\frac{\Omega(x-y)}{|x-y|^{n-\alpha}}f(y)\,dy,
\end{equation*}
where $\Omega$ is homogeneous of degree zero in $\mathbb R^n$ for $n\geq2$, and is integrable on the unit sphere $\mathbb{S}^{n-1}$. In this paper we study boundedness properties of the homogeneous fractional integral operator $T_{\Omega,\alpha}$ acting on weighted Lebesgue and Morrey spaces. Under certain Dini-type smoothness condition on $\Omega$, we prove that $T_{\Omega,\alpha}$ is bounded from $L^{p}(\omega^p)$ to $\mathcal{C}^{\gamma,\ell}_{\omega}$(a class of Campanato spaces) for appropriate indices, when $n/{\alpha}<p<\infty$. Moreover, we prove that if $\Omega$ satisfies certain Dini-type smoothness condition on $\mathbb{S}^{n-1}$, then $T_{\Omega,\alpha}$ is bounded from $\mathcal{M}^{p,\kappa}(\omega^p,\omega^q)$ to $\mathcal{C}^{\gamma,\ell}(\omega^q)$(weighted Campanato spaces) for appropriate indices, when $p/q<\kappa<1$.
\end{abstract}

\begin{keyword}
Homogeneous fractional integral operators, weighted Morrey spaces, weighted Campanato spaces, Dini-type smoothness condition
\MSC[2020] Primary 42B20, 42B25; Secondary 42B35
\end{keyword}

\end{frontmatter}

\section{Introduction and preliminaries}
\label{sec1}
The symbols $\mathbb R$ and $\mathbb N$ stand for the sets of all real numbers and natural numbers, respectively. Let $n\in\mathbb N$ and $\mathbb R^n$ be the $n$-dimensional Euclidean space endowed with the Lebesgue measure $dx$. The Euclidean norm of $x=(x_1,x_2,\dots,x_n)\in \mathbb R^n$ is given by $|x|:=(\sum_{i=1}^n x_i^2)^{1/2}$.
One of the most significant operators in harmonic analysis is the Riesz potential operator $I_{\alpha}$ of order $\alpha$ $(0<\alpha<n)$, also known as the fractional integral operator. Given $0<\alpha<n$, the Riesz potential operator $I_{\alpha}$ of order $\alpha$ is defined by
\begin{equation*}
I_{\alpha}f(x):=\frac{1}{\gamma(\alpha)}\int_{\mathbb R^n}\frac{f(y)}{|x-y|^{n-\alpha}}dy, \quad x\in\mathbb R^n,
\end{equation*}
where
\begin{equation*}
\gamma(\alpha):=\frac{\pi^{n/2}2^{\alpha}\Gamma(\alpha/2)}{\Gamma({(n-\alpha)}/2)}
\end{equation*}
and $\Gamma(\cdot)$ being the usual gamma function (see \cite{grafakos}). Let $(-\Delta)^{\alpha/2}$ denote the $\alpha/2$-th order Laplacian. Then $u=I_{\alpha}f$ is viewed as a solution of the $\alpha/2$-th order Laplace equation
\begin{equation*}
(-\Delta)^{\alpha/2}u=f
\end{equation*}
in the sense of the Fourier transform; i.e., $(-\Delta)^{\alpha/2}$ exists as the inverse of $I_{\alpha}$. It is well known that the Riesz potential operator $I_{\alpha}$ plays an important role in harmonic analysis, potential theory and PDEs, particularly in the study of smoothness properties of functions. Let $0<\alpha<n$ and $1<p<q<\infty$. The classical Hardy--Littlewood--Sobolev inequality states that $I_{\alpha}$ is bounded from $L^p(\mathbb R^n)$ to $L^q(\mathbb R^n)$ if and only if $1/q=1/p-\alpha/n$. Historically, Sobolev proved the $n$-dimensional version of the Hardy--Littlewood--Sobolev inequality using the one-dimensional version which is due to Hardy and Littlewood. Note that the range of $p$ is $1<p<n/{\alpha}$. However, for the critical index $p=n/{\alpha}$ and $0<\alpha<n$, it is easy to verify that the operator $I_{\alpha}$ is not bounded from $L^{n/{\alpha}}(\mathbb R^n)$ to $L^{\infty}(\mathbb R^n)$ (see \cite[p.119]{stein}). Here $L^{\infty}(\mathbb R^n)$ denotes the Banach space of all essentially bounded measurable functions $f$ on $\mathbb R^n$. Instead, in this case, we have that as a substitute the Riesz potential operator $I_{\alpha}$ is bounded from $L^{n/{\alpha}}(\mathbb R^n)$ to $\mathrm{BMO}(\mathbb R^n)$ (see \cite[p.130]{duoand}).

In this paper, we are concerned with the boundedness properties of fractional integral operators with homogeneous kernels.
Let $n\geq2$ and $\mathbb{S}^{n-1}:=\{x\in\mathbb R^n:|x|=1\}$ denote the unit sphere in $\mathbb R^n$ equipped with the normalized Lebesgue measure $d\sigma(x')$. Here $x'=x/{|x|}$ for any $x\neq0$. Let $0<\alpha<n$ and $\Omega$ be a homogeneous function of degree zero on $\mathbb R^n$ and $\Omega\in L^s(\mathbb{S}^{n-1})$ with $s\geq1$. Then the homogeneous fractional integral operator $T_{\Omega,\alpha}$ is defined by
\begin{equation*}
T_{\Omega,\alpha}f(x):=\int_{\mathbb R^n}\frac{\Omega(x-y)}{|x-y|^{n-\alpha}}f(y)\,dy,\quad x\in\mathbb R^n.
\end{equation*}
When $\Omega\equiv1$, $T_{\Omega,\alpha}$ is just the Riesz potential operator $I_{\alpha}$. In 1971, Muckenhoupt and Wheeden \cite{muckenhoupt1} proved that the homogeneous fractional integral operator $T_{\Omega,\alpha}$ is bounded from $L^p(\mathbb R^n)$ to $L^q(\mathbb R^n)$, when $\Omega\in L^s(\mathbb{S}^{n-1})$, $s'<p<n/{\alpha}$ and $1/q=1/p-\alpha/n$ (see also \cite[Theorem 2.1]{wang3} for an alternative proof of this result). Moreover, for the critical index $p=n/{\alpha}$ and $0<\alpha<n$, it can be shown that the operator $T_{\Omega,\alpha}$ is bounded from
$L^{n/{\alpha}}(\mathbb R^n)$ to $\mathrm{BMO}(\mathbb R^n)$, provided that $\Omega$ satisfies certain smoothness condition on the unit sphere $\mathbb{S}^{n-1}$ (see \cite[Theorem 1]{ding2} and \cite[Theorem 2.7]{wang3}).

A weight $\omega$ is a nonnegative, locally integrable function on $\mathbb R^n$ that take values in $(0,\infty)$ almost everywhere. Given a weight $\omega$ and a measurable set $E\subset\mathbb R^n$, we denote by $m(E)$ the Lebesgue measure of the set $E$. The complement of the set $E$ is denoted by $E^{\complement}$, and the characteristic function of the set $E$ is denoted by $\chi_{E}$: $\chi_{E}(x)=1$ if $x\in E$ and $\chi_{E}(x)=0$ if $x\notin E$. We use the notation
\begin{equation*}
\omega(E):=\int_{E}\omega(x)\,dx
\end{equation*}
to denote the $\omega$-measure of the set $E$. For given $x_0\in\mathbb R^n$ and $r>0$, $B(x_0,r)=\{x\in\mathbb R^n:|x-x_0|<r\}$ denotes the open ball centered at $x_0$ with radius $r$, $m(B(x_0,r))$ is the Lebesgue measure of the ball $B(x_0,r)$. Given a ball $B$ and $\lambda>0$, $\lambda B$ denotes the ball with the same center as $B$ whose radius is $\lambda$ times that of $B$. For any given $p\in[1,\infty)$, the weighted Lebesgue space with respect to the measure $\omega(x)dx$ is denoted by $L^p(\omega)$, and we set
\begin{equation*}
\|f\|_{L^p(\omega)}:=\bigg(\int_{\mathbb R^n}|f(x)|^p\omega(x)\,dx\bigg)^{1/p}<+\infty.
\end{equation*}
When $\omega\equiv1$, $L^p(\omega)=L^p(\mathbb R^n)$, the usual Lebesgue space.
To state some known results, we first recall some necessary definitions and notations.
\begin{defin}
A locally integrable function $\hbar$ is said to be in the bounded mean oscillation space $\mathrm{BMO}(\mathbb R^n)$, if
\begin{equation*}
\|\hbar\|_{\mathrm{BMO}}:=\sup_{B\subset\mathbb R^n}\frac{1}{m(B)}\int_B|\hbar(x)-\hbar_B|\,dx<+\infty,
\end{equation*}
where $\hbar_B$ stands for the mean value of $\hbar$ over $B$; i.e.,
\begin{equation*}
\hbar_B:=\frac{1}{m(B)}\int_B \hbar(y)\,dy
\end{equation*}
and the supremum is taken over all balls $B$ in $\mathbb R^n$. Modulo constants, the space $\mathrm{BMO}(\mathbb R^n)$ is a Banach space with respect to the norm $\|\cdot\|_{\mathrm{BMO}}$.
For a weight function $\omega$ on $\mathbb R^n$, the weighted version of $\mathrm{BMO}$ is denoted by $\mathrm{BMO}_{\omega}$. We say that $\hbar$ is in the weighted space $\mathrm{BMO}_{\omega}$, if there is a constant $C>0$ such that for any ball $B$ in $\mathbb R^n$,
\begin{equation*}
\Big[\underset{x\in{B}}{\mathrm{ess\,sup}}\,\omega(x)\Big]\cdot
\bigg(\frac{1}{m(B)}\int_B|\hbar(x)-\hbar_B|\,dx\bigg)\leq C<+\infty.
\end{equation*}
The smallest constant $C$ is then taken to be the norm of $\hbar$ in this space, and is denoted by $\|\hbar\|_{\mathrm{BMO}_\omega}$.
\end{defin}
The space $\mathrm{BMO}(\mathbb R^n)$ was first introduced by John and Nirenberg (see \cite{john}). The weighted space $\mathrm{BMO}_{\omega}$ was first introduced by Muckenhoupt and Wheeden (see \cite{muckenhoupt2}).

Following \cite{grafakos} and \cite{muckenhoupt2}, let us give the definitions of some weight classes.

\begin{defin}[\cite{grafakos}]
A weight function $\omega$ defined on $\mathbb R^n$ is in the Muckenhoupt class $A_p$ with $1<p<\infty$, if there exists a positive constant $C$ such that for any ball $B\subset\mathbb R^n$,
\begin{equation*}
\bigg(\frac{1}{m(B)}\int_{B}\omega(x)\,dx\bigg)\bigg(\frac{1}{m(B)}\int_{B}\omega(x)^{1-p'}dx\bigg)^{p-1}\leq C<+\infty,
\end{equation*}
where $p'$ denotes the conjugate exponent of $p>1$ such that $1/{p'}+1/p=1$, and $p'=1$ when $p=\infty$. The smallest constant $C>0$ such that the above inequality holds is called the $A_p$ constant of $\omega$, and is denoted by $[\omega]_{A_p}$. For $p=1$, a weight function $\omega$ defined on $\mathbb R^n$ is in the Muckenhoupt class $A_1$, if there exists a positive constant $C$ such that for any ball $B\subset\mathbb R^n$,
\begin{equation*}
\frac{1}{m(B)}\int_{B}\omega(x)\,dx\leq C\cdot\Big[\underset{x\in B}{\mathrm{ess\,inf}}\,\omega(x)\Big].
\end{equation*}
The infimum of $C>0$ satisfying the above inequality is called the $A_1$ constant of $\omega$, and is denoted by $[\omega]_{A_1}$.
\end{defin}

\begin{defin}[\cite{muckenhoupt2}]
A weight function $\omega$ defined on $\mathbb R^n$ is said to belong to the Muckenhoupt--Wheeden class $A(p,q)$ for $1<p<q<\infty$, if there exists a constant $C>0$ such that
\begin{equation*}
\bigg(\frac{1}{m(B)}\int_{B}\omega(x)^q\,dx\bigg)^{1/q}
\bigg(\frac{1}{m(B)}\int_{B}\omega(x)^{-p'}\,dx\bigg)^{1/{p'}}\leq C<+\infty,
\end{equation*}
for every ball $B$ in $\mathbb R^n$, where $1/{p'}+1/p=1$. When $p=1$, a weight function $\omega$ defined on $\mathbb R^n$ is in the Muckenhoupt--Wheeden class $A(1,q)$ with $1<q<\infty$, if there exists a constant $C>0$ such that
\begin{equation*}
\bigg(\frac{1}{m(B)}\int_{B}\omega(x)^q\,dx\bigg)^{1/q}
\bigg[\underset{x\in B}{\mathrm{ess\,sup}}\,\frac{1}{\omega(x)}\bigg]\leq C<+\infty,
\end{equation*}
for every ball $B$ in $\mathbb R^n$. A weight function $\omega$ defined on $\mathbb R^n$ is said to belong to the Muckenhoupt--Wheeden class $A(p,\infty)$ with $1<p<\infty$, if there exists a constant $C>0$ such that
\begin{equation*}
\Big[\underset{x\in{B}}{\mathrm{ess\,sup}}\,\omega(x)\Big]\cdot
\bigg(\frac{1}{m(B)}\int_{B}\omega(x)^{-p'}\,dx\bigg)^{1/{p'}}\leq C<+\infty,
\end{equation*}
for every ball $B$ in $\mathbb R^n$, where
\begin{equation*}
\underset{x\in B}{\mathrm{ess\,sup}}\,\omega(x):=\inf\big\{M>0:m\big(\big\{x\in B:\omega(x)>M\big\}\big)=0\big\}.
\end{equation*}
\end{defin}

A few historical remarks are given as follows:
\begin{enumerate}
  \item In 1974, Muckenhoupt and Wheeden studied the weighted boundedness of $I_{\alpha}$ for $0<\alpha<n$, and proved that the Riesz potential operator $I_{\alpha}$ is bounded from $L^p(\omega^p)$ to $L^q(\omega^q)$, when $\omega\in A(p,q)$, $1<p<n/{\alpha}$ and $1/q=1/p-{\alpha}/n$ (see \cite[Theorem 2]{muckenhoupt2}). Moreover, for the extreme case $p=n/{\alpha}$, Muckenhoupt and Wheeden also proved the following result (see \cite[Theorem 7]{muckenhoupt2}). Let $0<\alpha<n$ and $p=n/{\alpha}$. If $\omega\in A(p,\infty)$, then the Riesz potential operator $I_{\alpha}$ is bounded from $L^p(\omega^p)$ to $\mathrm{BMO}_{\omega}$.
  \item In 1971, Muckenhoupt and Wheeden \cite{muckenhoupt1} studied weighted norm inequalities for $T_{\Omega,\alpha}$ with the power weight $\omega(x)=|x|^\beta$.In 1998, Ding and Lu considered the general case, and proved that the homogeneous fractional integral operator $T_{\Omega,\alpha}$ is bounded from $L^p(\omega^p)$ to $L^q(\omega^q)$, when $\Omega\in L^s(\mathbb{S}^{n-1})$ with $s>p'$, $\omega^{s'}\in A(p/{s'},q/{s'})$, $1<p<n/{\alpha}$ and $1/q=1/p-{\alpha}/n$, see \cite[Theorem 1]{ding1}(see also \cite[Chapter 3]{lu} for the weighted version).
  \item In 2002, Ding and Lu considered the extreme case $p=n/{\alpha}$, and proved the following result (see \cite[Theorem 1]{ding2}). Let $0<\alpha<n$ and $p=n/{\alpha}$. If $\Omega$ satisfies the $L^s$-Dini smoothness condition $\mathfrak{D}_{s}$ with $s>n/{(n-\alpha)}$(see Definition \ref{defd} below) and $\omega^{s'}\in A(p/{s'},\infty)$, then the operator $T_{\Omega,\alpha}$ is bounded from $L^p(\omega^p)$ to $\mathrm{BMO}_{\omega}$.
\end{enumerate}

On the other hand, the concept of classical Morrey space was originally introduced in \cite{morrey} to study the local regularity of solutions to second order elliptic partial differential equations. Nowadays the classical Morrey space has been studied extensively in the literature, and found a wide range of applications in harmonic analysis, potential theory and PDEs. Let us first recall the definition of the classical Morrey space. Let $1\leq p<\infty$ and $0\leq\kappa\leq1$. The classical Morrey space $\mathcal{M}^{p,\kappa}(\mathbb R^n)$ is defined to be the set of all locally integrable functions $f$ on $\mathbb R^n$ such that
\begin{equation*}
\begin{split}
\|f\|_{\mathcal{M}^{p,\kappa}}:=&\sup_{(x_0,r)\in\mathbb R^n\times(0,\infty)}
\bigg(\frac{1}{m(B(x_0,r))^{\kappa}}\int_{B(x_0,r)}|f(x)|^p\,dx\bigg)^{1/p}\\
=&\sup_{(x_0,r)\in\mathbb R^n\times(0,\infty)}\frac{1}{m(B(x_0,r))^{\kappa/p}}\big\|f\cdot\chi_{B(x_0,r)}\big\|_{L^p}<+\infty.
\end{split}
\end{equation*}
Here $B(x_0,r)$ is the Euclidean open ball with center $x\in\mathbb R^n$ and radius $r\in(0,\infty)$. It is easy to check that $\mathcal{M}^{p,\kappa}(\mathbb R^n)$ is a Banach function space with respect to the norm $\|\cdot\|_{\mathcal{M}^{p,\kappa}}$. $\mathcal{M}^{p,\kappa}(\mathbb R^n)$ is an extension of $L^p(\mathbb R^n)$ in the sense that $\mathcal{M}^{p,0}(\mathbb R^n)=L^p(\mathbb R^n)$. Note that $\mathcal{M}^{p,1}(\mathbb R^n)=L^{\infty}(\mathbb R^n)$ by the Lebesgue
differentiation theorem. If $\kappa<0$ or $\kappa>1$, then $\mathcal{M}^{p,\kappa}(\mathbb R^n)=\Theta$, where $\Theta$ is the set of all functions equivalent to $0$ on $\mathbb R^n$. It is very natural to consider the weighted version of Morrey spaces. In 2009, Komori and Shirai \cite{komori} introduced the weighted Morrey space $\mathcal{M}^{p,\kappa}(\omega,\nu)$, and studied the weighted boundedness of classical
operators in harmonic analysis, such as the Hardy--Littlewood maximal operator, the Calder\'{o}n--Zygmund operator and the fractional integral operator on this space. Let $1\leq p<\infty$, $0<\kappa<1$ and $\omega,\nu$ be two weight functions. The weighted Morrey space $\mathcal{M}^{p,\kappa}(\omega,\nu)$ is defined as the set of all integrable functions $f$ on $\mathbb R^n$ such that
\begin{equation*}
\begin{split}
\big\|f\big\|_{L^{p,\kappa}(\omega)}&:=\sup_{(x_0,r)\in\mathbb R^n\times(0,\infty)}\bigg(\frac{1}{\nu(B(x_0,r))^{\kappa}}\int_{B(x_0,r)}|f(x)|^p\omega(x)\,dx\bigg)^{1/p}\\
&=\sup_{(x_0,r)\in\mathbb R^n\times(0,\infty)}\frac{1}{\nu(B(x_0,r))^{\kappa/p}}\big\|f\cdot\chi_{B(x_0,r)}\big\|_{L^p(\omega)}<+\infty.
\end{split}
\end{equation*}
When $\omega=\nu$, we simply write $\mathcal{M}^{p,\kappa}(\omega,\nu)$ as $\mathcal{M}^{p,\kappa}(\omega)$. When $\omega\equiv1$, $\mathcal{M}^{p,\kappa}(\omega)=\mathcal{M}^{p,\kappa}(\mathbb R^n)$.
\begin{enumerate}
  \item In 2009, Komori and Shirai investigated the boundedness of $I_{\alpha}$ on weighted Morrey spaces, and showed that the Riesz potential operator $I_{\alpha}$ is bounded from $\mathcal{M}^{p,\kappa}(\omega^p,\omega^q)$ to $\mathcal{M}^{q,{\kappa q}/p}(\omega^q)$, whenever
      \begin{equation*}
      0<\alpha<n,1<p<n/{\alpha},1/q=1/p-{\alpha}/n,0<\kappa<p/q, ~\mbox{and} ~~\omega\in A(p,q),
      \end{equation*}
      see \cite[Theorem 3.6]{komori}. For the unweighted case, see also \cite{adams}, \cite{adams1} and \cite{wang3}.
  \item In 2013, the author established the corresponding results for $T_{\Omega,\alpha}$, when $0<\alpha<n$, $\Omega$ is homogeneous of degree zero on $\mathbb R^n$ and $\Omega\in L^s(\mathbb{S}^{n-1})$ with $1<s\leq\infty$. It is shown that the homogeneous fractional integral operator $T_{\Omega,\alpha}$ is bounded from $\mathcal{M}^{p,\kappa}(\omega^p,\omega^q)$ to $\mathcal{M}^{q,{\kappa q}/p}(\omega^q)$, whenever
  \begin{equation*}
  1\leq s'<p<n/{\alpha},1/q=1/p-{\alpha}/n,0<\kappa<p/q,~\mbox{and} ~\omega^{s'}\in A(p/{s'},q/{s'}),
  \end{equation*}
  see \cite[Theorem 1.2]{wang}(see also \cite{lu2} for the unweighted case).
  \item What do we have for the extreme case $\kappa=p/q=1-{(\alpha p)}/n$?Recently, the author obtained the following result. Let $0<\alpha<n$, $1\leq p<n/{\alpha}$ and $1/q=1/p-{\alpha}/n$. If $\Omega$ satisfies the $L^s$-Dini smoothness condition $\mathfrak{D}_{s}$ with $1\leq s'\leq p$ and $\omega^{s'}\in A(p/{s'},q/{s'})$, then the homogeneous fractional integral operator $T_{\Omega,\alpha}$ is bounded from $\mathcal{M}^{p,\kappa}(\omega^p,\omega^q)$ to $\mathrm{BMO}(\mathbb R^n)$ (see \cite[Theorem 2.2]{wang2}). Our result is new even for the case $\Omega\equiv1$ and $s=\infty$. In particular, if $1\leq p<n/{\alpha}$, $1/q=1/p-{\alpha}/n$ and $\omega\in A(p,q)$, then the Riesz potential operator $I_{\alpha}$ is bounded from $\mathcal{M}^{p,\kappa}(\omega^p,\omega^q)$ to $\mathrm{BMO}(\mathbb R^n)$ (see \cite[Corollary 2.3]{wang2}). It should be pointed out that the unweighted version of this result was also given in \cite{meng}.
\end{enumerate}
Motivated by these results, it is natural to ask what happens when $n/{\alpha}<p<\infty$(for weighted Lebesgue spaces) and $p/q<\kappa<1$(for weighted Morrey spaces). The main purpose of this paper is to answer the question above. Let us now introduce a class of Campanato spaces.

\begin{defin}
Let $1\leq\ell<\infty$ and $0\leq\gamma\leq1$. Then the Campanato space $\mathcal{C}^{\gamma,\ell}_{\omega}$ with respect to $\omega$ is defined to be the set of all integrable functions $\hbar$ on $\mathbb R^n$ such that
\begin{equation*}
\|\hbar\|_{\mathcal{C}^{\gamma,\ell}_{\omega}}:=\sup_{B\subset\mathbb R^n}\Big[\underset{y\in{B}}{\mathrm{ess\,sup}}\,\omega(y)\Big]
\frac{1}{m(B)^{\gamma/n}}\bigg(\frac{1}{m(B)}\int_{B}|\hbar(x)-\hbar_B|^{\ell}dx\bigg)^{1/{\ell}}<+\infty,
\end{equation*}
where the supremum is taken over all balls $B$ in $\mathbb R^n$.
\end{defin}

\begin{defin}
Let $1\leq\ell<\infty$, $0\leq\gamma\leq1$ and $\nu$ be a weight function. Then the weighted Campanato space $\mathcal{C}^{\gamma,\ell}({\nu})$ is defined as the set of all integrable functions $\hbar$ on $\mathbb R^n$ such that
\begin{equation*}
\|\hbar\|_{\mathcal{C}^{\gamma,\ell}({\nu})}:=\sup_{B\subset\mathbb R^n}\frac{1}{\nu(B)^{\gamma/n}}
\bigg(\frac{1}{m(B)}\int_{B}|\hbar(x)-\hbar_B|^{\ell}dx\bigg)^{1/{\ell}}<+\infty,
\end{equation*}
where the supremum is taken over all balls $B$ in $\mathbb R^n$.
\end{defin}
If $\gamma=0$ and $\ell=1$, then $\mathcal{C}^{\gamma,\ell}_{\omega}$ and $\mathcal{C}^{\gamma,\ell}({\nu})$ coincide with the space $\mathrm{BMO}_{\omega}$ and the space $\mathrm{BMO}(\mathbb R^n)$, respectively. Motivated by \cite{ding2} and \cite{wang2}, under the hypothesis
that $\Omega$ satisfies certain Dini-type smoothness condition, we will prove that the homogeneous fractional integral operator $T_{\Omega,\alpha}$
is bounded from $L^{p}(\omega^p)$ to $\mathcal{C}^{\gamma,\ell}_{\omega}$ for appropriate indices $p,\gamma$ and $\ell$, when $n/{\alpha}<p<\infty$. Moreover, it can also be shown that $T_{\Omega,\alpha}$ is bounded from $\mathcal{M}^{p,\kappa}(\omega^p,\omega^q)$ to $\mathcal{C}^{\gamma,\ell}(\omega^q)$ for appropriate indices $p,q,\gamma$ and $\ell$, when $p/q<\kappa<1$.

In this article, $C>0$ denotes a universal constant which is independent of the main parameters involved but whose value may be different from line to line. The symbol $\mathbf{X}\lesssim \mathbf{Y}$ means that there exists a positive constant $C$ such that $\mathbf{X}\leq C\mathbf{Y}$. We use the notation $p'=p/{(p-1)}$ and $1'=\infty$, $\infty'=1$.

\section{Main results}
\label{sec2}
In this section, we will establish boundedness properties of homogeneous fractional integral operators $T_{\Omega,\alpha}$ acting on weighted Lebesgue and Morrey spaces. Before stating our main theorems, let us now introduce some notations.
\begin{defin}\label{defd}
Let $1\leq s<\infty$ and $0\leq\beta\leq1$. We say that $\Omega$ satisfies the $L^{s;\beta}$-Dini smoothness condition $\mathfrak{D}_{s;\beta}$, if $\Omega\in L^s(\mathbb{S}^{n-1})$ is homogeneous of degree zero on $\mathbb R^n$, and
\begin{equation*}
\int_0^1\frac{\omega_s(\delta)}{\delta^{1+\beta}}\,d\delta<+\infty,
\end{equation*}
where $\omega_s(\delta)$ denotes the integral modulus of continuity of order $s$, which is defined by
\begin{equation}\label{964}
\omega_s(\delta):=\sup_{|\rho|<\delta}\bigg(\int_{\mathbb{S}^{n-1}}\big|\Omega(\rho x')-\Omega(x')\big|^sd\sigma(x')\bigg)^{1/s},
\end{equation}
and $\rho$ is a rotation in $\mathbb R^n$ and
\begin{equation*}
|\rho|:=\|\rho-I\|=\sup_{x'\in\mathbb{S}^{n-1}}\big|\rho x'-x'\big|.
\end{equation*}
We also say that $\Omega$ satisfies the $L^{\infty;\beta}$-Dini smoothness condition $\mathfrak{D}_{\infty;\beta}$, if $\Omega\in L^{\infty}(\mathbb{S}^{n-1})$ is homogeneous of degree zero on $\mathbb R^n$, and
\begin{equation*}
\int_0^1\frac{\omega_{\infty}(\delta)}{\delta^{1+\beta}}\,d\delta<+\infty,
\end{equation*}
where $\omega_{\infty}(\delta)$ is defined by
\begin{equation}\label{965}
\omega_{\infty}(\delta):=\sup_{|\rho|<\delta,x'\in \mathbb{S}^{n-1}}\big|\Omega(\rho x')-\Omega(x')\big|.
\end{equation}
Here $x'=x/{|x|}$ for any $x\in \mathbb R^n\setminus\{0\}$. For $\beta=0$, we simply write $\mathfrak{D}_{s;\beta}$ as $\mathfrak{D}_{s}$.
\end{defin}

\begin{rem}\label{remark10}
Let $1\leq s\leq\infty$. Since
\begin{equation*}
\int_0^1\frac{\omega_s(\delta)}{\delta}\,d\delta<\int_0^1\frac{\omega_s(\delta)}{\delta^{1+\beta}}\,d\delta
\end{equation*}
holds for any $\beta>0$, we know that if $\Omega$ satisfies the smoothness condition $\mathfrak{D}_{s;\beta}$, then it must satisfy the smoothness condition $\mathfrak{D}_{s}$.
\end{rem}

Our results can be stated as follows:
\begin{thm}\label{thm1}
Suppose that $\Omega$ satisfies the $L^{s;\beta}$-Dini smoothness condition $\mathfrak{D}_{s;\beta}$ with $n/{(n-\alpha)}\leq s\leq\infty$ and $0<\beta\leq1$. If $0<\alpha<1+n/p$, $n/{\alpha}<p<\infty$ and $\omega^{s'}\in A(p/{s'},\infty)$, then the operator $T_{\Omega,\alpha}$ is bounded from $L^{p}(\omega^p)$ to $\mathcal{C}^{\gamma,\ell}_{\omega}$ with $1\leq\ell<n/{(n-\alpha)}$, $\gamma:=n(\alpha/n-1/p)=\alpha-n/p$ and $\gamma<\beta$.
\end{thm}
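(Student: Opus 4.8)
The plan is to verify the defining inequality of $\mathcal{C}^{\gamma,\ell}_{\omega}$ for an arbitrary ball $B=B(x_0,r)$; write $\|\omega\|_{L^{\infty}(Q)}=\underset{y\in Q}{\mathrm{ess\,sup}}\,\omega(y)$ for a ball $Q$. Since $\ell\ge1$, Jensen's inequality lets one replace the mean $(T_{\Omega,\alpha}f)_B$ by any constant $c_B$ at the cost of a factor $2$, so it suffices to produce, for each $B$, a constant $c_B$ with
\begin{equation*}
\|\omega\|_{L^{\infty}(B)}\,\frac{1}{m(B)^{\gamma/n}}\bigg(\frac{1}{m(B)}\int_B\big|T_{\Omega,\alpha}f(x)-c_B\big|^{\ell}\,dx\bigg)^{1/\ell}\lesssim\|f\|_{L^p(\omega^p)} .
\end{equation*}
First I would split $f=f_1+f_2$ with $f_1=f\chi_{2B}$, $f_2=f\chi_{\mathbb R^n\setminus 2B}$, and take $c_B=T_{\Omega,\alpha}f_2(x_0)$. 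Since $p>n/\alpha$, the operator $T_{\Omega,\alpha}$ is defined on $L^p(\omega^p)$ only modulo additive constants, and one argues throughout with the absolutely convergent difference $T_{\Omega,\alpha}f_2(x)-T_{\Omega,\alpha}f_2(x_0)=\int_{\mathbb R^n\setminus 2B}\big[|x-y|^{\alpha-n}\Omega(x-y)-|x_0-y|^{\alpha-n}\Omega(x_0-y)\big]f(y)\,dy$. It then remains to bound $\mathrm{I}:=\big(\frac{1}{m(B)}\int_B|T_{\Omega,\alpha}f_1(x)|^{\ell}\,dx\big)^{1/\ell}$ and $\mathrm{II}:=\big(\frac{1}{m(B)}\int_B|T_{\Omega,\alpha}f_2(x)-T_{\Omega,\alpha}f_2(x_0)|^{\ell}\,dx\big)^{1/\ell}$ by a constant times $r^{\gamma}\|\omega\|_{L^{\infty}(B)}^{-1}\|f\|_{L^p(\omega^p)}$. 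Two facts enter repeatedly: (i) $p>n/\alpha$ and $s\ge n/(n-\alpha)$ force $p'<n/(n-\alpha)\le s$, so $s>p'$, the exponent $r$ with $1/r=1/p'-1/s$ is finite, $p'\le r$, and $r=s'(p/s')'$; (ii) $\omega^{s'}\in A(p/s',\infty)$ is equivalent to $\omega^{-r}\in A_1$, whence $\int_Q\omega(y)^{-r}\,dy\lesssim m(Q)\,\|\omega\|_{L^{\infty}(Q)}^{-r}$ for every ball $Q$.

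For the local term $\mathrm{I}$, I would apply Minkowski's integral inequality to pull the $L^{\ell}(B)$-norm inside the $y$-integral:
\begin{equation*}
\Big(\int_B|T_{\Omega,\alpha}f_1(x)|^{\ell}\,dx\Big)^{1/\ell}\le\int_{2B}|f(y)|\Big(\int_B\frac{|\Omega(x-y)|^{\ell}}{|x-y|^{(n-\alpha)\ell}}\,dx\Big)^{1/\ell}dy .
\end{equation*}
Since $B\subset B(y,3r)$ for $y\in 2B$, passing to polar coordinates about $y$ and using the zero-homogeneity of $\Omega$, the local integrability of $|x-y|^{-(n-\alpha)\ell}$ (which is exactly the hypothesis $\ell<n/(n-\alpha)$), and $L^s(\mathbb{S}^{n-1})\subset L^{\ell}(\mathbb{S}^{n-1})$ (valid because $\ell<n/(n-\alpha)\le s$), the inner factor is $\lesssim\|\Omega\|_{L^s(\mathbb{S}^{n-1})}\,r^{n/\ell-(n-\alpha)}$. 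Estimating $\int_{2B}|f(y)|\,dy\lesssim\|f\|_{L^p(\omega^p)}\,r^{n/p'}\|\omega\|_{L^{\infty}(2B)}^{-1}$ by Hölder's inequality and facts (i)--(ii), then dividing by $m(B)^{1/\ell}\simeq r^{n/\ell}$ and collecting the powers of $r$ (the exponent becomes $-(n-\alpha)+n/p'=\alpha-n/p=\gamma$), and using $\|\omega\|_{L^{\infty}(2B)}^{-1}\le\|\omega\|_{L^{\infty}(B)}^{-1}$, yields the claimed bound for $\mathrm{I}$.

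For the non-local term $\mathrm{II}$, I would bound, for $x\in B$ and $y\notin 2B$, the kernel difference by
\begin{equation*}
\bigg|\frac{\Omega(x-y)}{|x-y|^{n-\alpha}}-\frac{\Omega(x_0-y)}{|x_0-y|^{n-\alpha}}\bigg|\le|\Omega(x-y)|\,\Big|\,|x-y|^{\alpha-n}-|x_0-y|^{\alpha-n}\Big|+\frac{|\Omega(x-y)-\Omega(x_0-y)|}{|x_0-y|^{n-\alpha}} .
\end{equation*}
Because $|x-x_0|<r\le\frac{1}{2}|x_0-y|$, one has $|x-y|\simeq|x_0-y|$, the mean value theorem gives $\big|\,|x-y|^{\alpha-n}-|x_0-y|^{\alpha-n}\big|\lesssim r\,|x_0-y|^{\alpha-n-1}$, and the unit vectors $(x-y)/|x-y|$, $(x_0-y)/|x_0-y|$ differ by an angle $\lesssim|x-x_0|/|x_0-y|$. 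Decompose $\mathbb R^n\setminus 2B=\bigcup_{j\ge1}(2^{j+1}B\setminus 2^jB)$; on the $j$-th annulus $|x_0-y|\simeq 2^jr$. Applying there the three-exponent Hölder inequality with exponents $(p,s,r)$, together with $\int_{2^{j+1}B}|\Omega(x-y)|^s\,dy\lesssim\|\Omega\|_{L^s(\mathbb{S}^{n-1})}^s(2^jr)^n$, the bound $\int_{2^{j+1}B}\omega(y)^{-r}\,dy\lesssim m(2^{j+1}B)\,\|\omega\|_{L^{\infty}(B)}^{-r}$ from (ii), and $n/s+n/r=n/p'$, the first-bracket contribution to $|T_{\Omega,\alpha}f_2(x)-T_{\Omega,\alpha}f_2(x_0)|$ is $\lesssim r\sum_{j\ge1}(2^jr)^{\alpha-n-1}(2^jr)^{n/p'}\,\|f\|_{L^p(\omega^p)}\,\|\omega\|_{L^{\infty}(B)}^{-1}=r^{\gamma}\Big(\sum_{j\ge1}2^{j(\gamma-1)}\Big)\|f\|_{L^p(\omega^p)}\,\|\omega\|_{L^{\infty}(B)}^{-1}$, where the geometric series converges precisely because $\gamma<1$, i.e.\ $\alpha<1+n/p$. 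For the second bracket one argues in the same way, with $\|\Omega\|_{L^s(\mathbb{S}^{n-1})}(2^jr)^{n/s}$ replaced by the annular oscillation $\big(\int_{2^{j+1}B\setminus 2^jB}|\Omega(x-y)-\Omega(x_0-y)|^s\,dy\big)^{1/s}$, which a standard estimate controls by $\lesssim(2^jr)^{n/s}\Big(\omega_s(c2^{-j})+\int_{c2^{-j-1}}^{c2^{-j}}\omega_s(\delta)\,\frac{d\delta}{\delta}\Big)$; after collecting powers this leaves the series $\sum_{j\ge1}2^{j\gamma}\Big(\omega_s(c2^{-j})+\int_{c2^{-j-1}}^{c2^{-j}}\omega_s(\delta)\,\frac{d\delta}{\delta}\Big)\lesssim\int_0^1\frac{\omega_s(\delta)}{\delta^{1+\gamma}}\,d\delta\le\int_0^1\frac{\omega_s(\delta)}{\delta^{1+\beta}}\,d\delta<\infty$, the last two steps using $\gamma<\beta$ and the smoothness condition $\mathfrak{D}_{s;\beta}$. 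Both contributions are uniform in $x\in B$, so the $L^{\ell}(B)$-average is harmless and $\mathrm{II}$ obeys the required bound. (For $s=\infty$ the argument is identical, with $L^s(\mathbb{S}^{n-1})$-norms, $\omega_s$, $s'$, $r$ replaced respectively by $L^{\infty}$-norms, $\omega_{\infty}$, $1$, $p'$.)

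The step I expect to be the main obstacle is the angular term of $\mathrm{II}$: one needs the sharp annular $L^s$-estimate for $\Omega(x-y)-\Omega(x_0-y)$ in terms of the integral modulus of continuity $\omega_s$, and then the series $\sum_j 2^{j\gamma}\omega_s(c2^{-j})$ must converge, which is precisely why the plain $L^s$-Dini condition $\mathfrak{D}_s$ does not suffice and the stronger $\mathfrak{D}_{s;\beta}$ with $\gamma<\beta$ is assumed. Careful bookkeeping of the weight factors is also essential: the gain $\|\omega\|_{L^{\infty}(B)}^{-1}$ must be extracted on each dyadic annulus and on $2B$ via the $A_1$-type inequality in (ii), using that $\|\omega\|_{L^{\infty}(Q)}$ is nondecreasing in $Q$ to pass from $2^{j+1}B$ and $2B$ down to $B$. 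The remaining hypotheses play transparent roles: $p>n/\alpha$ makes $\gamma>0$ (so the target is a genuine Campanato space and $s>p'$), $\alpha<1+n/p$ makes $\gamma<1$ (so the geometric series in $\mathrm{II}$ converges), and $s\ge n/(n-\alpha)$ ensures $\Omega\in L^{\ell}(\mathbb{S}^{n-1})$ for every admissible $\ell$, which is used in the local term $\mathrm{I}$.
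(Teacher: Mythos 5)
Your proposal is correct and follows essentially the same route as the paper: split $f$ into a near part (handled by Minkowski's integral inequality plus the polar-coordinate estimate of Lemma \ref{lem2} and H\"older with the $A(p/{s'},\infty)$ condition) and a far part (handled by an annular decomposition and the Ding--Lu kernel-difference estimate of Lemma \ref{lem1}, with $\gamma<1$ and $\gamma<\beta$ giving convergence of the two series). The only differences are cosmetic: you subtract the fixed constant $T_{\Omega,\alpha}f_2(x_0)$ rather than the ball average, use $2\mathcal{B}$ in place of $4\mathcal{B}$, recast $\omega^{s'}\in A(p/{s'},\infty)$ as $\omega^{-s'(p/{s'})'}\in A_1$, and sketch inline the radial/angular kernel estimate that the paper imports as Lemma \ref{lem1}.
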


\begin{rem}
When $n/{(n-\alpha)}\leq s\leq\infty$ and $n/{\alpha}<p<\infty$, one has
\begin{equation*}
s'\leq\frac{\,n\,}{\alpha}<p.
\end{equation*}
For the critical case $p=n/{\alpha}$ and $\gamma=0$, $\mathcal{C}^{\gamma,\ell}_{\omega}$ reduces to the space $\mathrm{BMO}_{\omega}$, then we have that $T_{\Omega,\alpha}$ is bounded from $L^{p}(\omega^p)$ to $\mathrm{BMO}_{\omega}$, which was given in \cite{ding2}.
\end{rem}

\begin{thm}\label{thm2}
Suppose that $\Omega$ satisfies the $L^{s;\beta}$-Dini smoothness condition $\mathfrak{D}_{s;\beta}$ with $n/{(n-\alpha)}<s\le\infty$ and $0<\beta\leq1$. If $0<\alpha<1+n/p$, $1\le s'\leq p<n/{\alpha}$, $1/q=1/p-{\alpha}/n$, $p/q<\kappa<1$ and $\omega^{s'}\in A(p/{s'},q/{s'})$, then the operator $T_{\Omega,\alpha}$ is bounded from $\mathcal{M}^{p,\kappa}(\omega^p,\omega^q)$ to $\mathcal{C}^{\gamma_*,\ell}(\omega^q)$ with $1\leq\ell<n/{(n-\alpha)}$, $\gamma_*:=n(\kappa/p-1/q)=\alpha-{(1-\kappa)n}/p$ and $\tau\cdot\gamma_*<\beta$, where
\begin{equation*}
\tau=
\begin{cases}
1+\frac{q/{s'}}{(p/{s'})'}, &\mbox{if}~ s'<p<n/{\alpha};\\
1,&\mbox{if}~ s'=p<n/{\alpha}.
\end{cases}
\end{equation*}
\end{thm}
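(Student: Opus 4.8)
The plan is to run the classical "local part plus global part" argument that proves boundedness of $T_{\Omega,\alpha}$ into $\mathrm{BMO}$, while bookkeeping the weights and the extra gain produced by the Morrey exponent $\kappa>p/q$, so that the image lands in $\mathcal{C}^{\gamma_*,\ell}(\omega^q)$. Write $\nu:=\omega^q$. First I would record two consequences of the hypothesis $\omega^{s'}\in A(p/s',q/s')$: by the power--mean inequality one gets $\omega\in A(p,q)$, and by the standard reformulation of the Muckenhoupt--Wheeden condition in terms of $A_p$ classes one gets $\nu=\omega^q=(\omega^{s'})^{q/s'}\in A_{\,1+(q/s')/(p/s')'}=A_\tau$ (with $\tau=1$, $\nu\in A_1$, when $s'=p$); in particular $\nu$ is a doubling weight with $\nu(2^{k+1}B)\lesssim 2^{kn\tau}\nu(B)$. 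One also checks the hypotheses force $0<\gamma_*<1$, so the target space is non-degenerate. Fix a ball $B=B(x_0,r)$; since the Campanato norm is computed modulo constants, it suffices to bound $\big(\frac1{m(B)}\int_B|T_{\Omega,\alpha}f(x)-c_B|^{\ell}\,dx\big)^{1/\ell}$ by $\nu(B)^{\gamma_*/n}\|f\|_{\mathcal{M}^{p,\kappa}(\omega^p,\omega^q)}$, uniformly in $B$, for a convenient $c_B$. Split $f=f_1+f_2$ with $f_1=f\chi_{2B}$ and $f_2=f\chi_{(2B)^{\complement}}$ and take $c_B:=T_{\Omega,\alpha}f_2(x_0)$, read in the modified sense via the absolutely convergent integral of the kernel difference obtained below (the usual device, since for $f$ in a Morrey space with $\kappa>p/q$ the integral defining $T_{\Omega,\alpha}f$ need not converge absolutely, but its oscillation does). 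It then remains to estimate the local term $\mathrm{I}:=\big(\frac1{m(B)}\int_B|T_{\Omega,\alpha}f_1|^{\ell}\big)^{1/\ell}$ and the global term $\mathrm{II}:=\big(\frac1{m(B)}\int_B|T_{\Omega,\alpha}f_2(x)-T_{\Omega,\alpha}f_2(x_0)|^{\ell}\big)^{1/\ell}$.

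For $\mathrm{I}$, I would apply Minkowski's integral inequality to pull the $L^\ell(B)$ norm inside, and bound $\big(\int_B|\Omega(x-y)|^\ell|x-y|^{-(n-\alpha)\ell}\,dx\big)^{1/\ell}$ for $y\in 2B$ by Hölder's inequality on the sphere, distributing the factor $|x-y|^{-(n-\alpha)}$ between the $L^s$-norm of $\Omega$ and the remaining radial power with a free exponent; this is possible precisely because $1\le\ell<n/(n-\alpha)<s$, the free exponent cancels, and one gets $\|T_{\Omega,\alpha}f_1\|_{L^\ell(B)}\lesssim\|\Omega\|_{L^s(\mathbb{S}^{n-1})}\,r^{n/\ell-(n-\alpha)}\|f\|_{L^1(2B)}$. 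Then Hölder's inequality with the weight $\omega^p$, the $A(p,q)$ condition for $\omega$ and the definition of the weighted Morrey norm give $\|f\|_{L^1(2B)}\le\|f\chi_{2B}\|_{L^p(\omega^p)}\|\omega^{-1}\|_{L^{p'}(2B)}\lesssim r^{n-\alpha}\,\nu(2B)^{\kappa/p-1/q}\|f\|_{\mathcal{M}^{p,\kappa}(\omega^p,\omega^q)}$. Since $\gamma_*/n=\kappa/p-1/q$ and $\nu$ is doubling, collecting powers of $r$ yields $\mathrm{I}\lesssim\nu(B)^{\gamma_*/n}\|f\|_{\mathcal{M}^{p,\kappa}(\omega^p,\omega^q)}$. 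This step uses only $\Omega\in L^s(\mathbb{S}^{n-1})$ and $\omega\in A(p,q)$, so it also covers the case $s'=p$, where the earlier weighted $L^p\!\to\!L^q$ bound for $T_{\Omega,\alpha}$ is not available.

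For $\mathrm{II}$, for $x\in B$ write $T_{\Omega,\alpha}f_2(x)-T_{\Omega,\alpha}f_2(x_0)=\int_{(2B)^{\complement}}\big[K(x,y)-K(x_0,y)\big]f(y)\,dy$ with $K(x,y)=\Omega(x-y)|x-y|^{-(n-\alpha)}$, decompose $(2B)^{\complement}=\bigcup_{k\ge1}(2^{k+1}B\setminus 2^kB)$, and split $K(x,y)-K(x_0,y)=\Omega(x-y)\big(|x-y|^{-(n-\alpha)}-|x_0-y|^{-(n-\alpha)}\big)+\big(\Omega(x-y)-\Omega(x_0-y)\big)|x_0-y|^{-(n-\alpha)}$. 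On the $k$-th annulus one has $|x_0-y|\approx|x-y|\approx 2^kr$; the mean value theorem bounds the first bracket by $C\,r\,(2^kr)^{-(n-\alpha)-1}$, while for the second the $L^{s;\beta}$-Dini condition enters through $\big(\int_{2^{k+1}B\setminus 2^kB}|\Omega(x-y)-\Omega(x_0-y)|^s\,dy\big)^{1/s}\lesssim(2^kr)^{n/s}\,\omega_s(c\,2^{-k})$ (and its $s=\infty$ analogue with $\omega_\infty$), obtained as in \cite{ding2} by writing $(x-y)'=\rho(x_0-y)'$ for a rotation $\rho$ with $|\rho|\lesssim 2^{-k}$. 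In both pieces the factor $\big(\int_{2^{k+1}B}|f|^{s'}\big)^{1/s'}$ is controlled, via Hölder's inequality with $\omega^{s'}$, the $A(p/s',q/s')$ condition for $\omega^{s'}$ and the Morrey norm, by $C\,(2^kr)^{n/s'-\alpha}\,\nu(2^{k+1}B)^{\gamma_*/n}\|f\|_{\mathcal{M}^{p,\kappa}(\omega^p,\omega^q)}$. Applying Hölder with exponents $s,s'$ on each annulus and substituting, all powers of $2^kr$ cancel because $1/s+1/s'=1$, leaving $\mathrm{II}\lesssim\|f\|_{\mathcal{M}^{p,\kappa}(\omega^p,\omega^q)}\sum_{k\ge1}\big(2^{-k}+\omega_s(c\,2^{-k})\big)\nu(2^{k+1}B)^{\gamma_*/n}$. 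Using $\nu(2^{k+1}B)^{\gamma_*/n}\lesssim 2^{k\tau\gamma_*}\nu(B)^{\gamma_*/n}$, the part of the sum with $2^{-k}$ is a convergent geometric series because $\tau\gamma_*<\beta\le1$, and the part with $\omega_s(c\,2^{-k})$ is comparable to $\nu(B)^{\gamma_*/n}\int_0^1\omega_s(\delta)\,\delta^{-1-\tau\gamma_*}\,d\delta\le\nu(B)^{\gamma_*/n}\int_0^1\omega_s(\delta)\,\delta^{-1-\beta}\,d\delta<\infty$ by $\mathfrak{D}_{s;\beta}$ together with $\tau\gamma_*<\beta$. Hence $\mathrm{II}\lesssim\nu(B)^{\gamma_*/n}\|f\|_{\mathcal{M}^{p,\kappa}(\omega^p,\omega^q)}$, and combining with the bound for $\mathrm{I}$ and taking the supremum over $B$ finishes the proof.

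I expect the main obstacle to lie in the global term: one must (i) set up the modified operator so that the a priori divergent $T_{\Omega,\alpha}f$ is replaced by a convergent oscillation; (ii) prove the annular $L^s$-estimate for $\Omega(x-y)-\Omega(x_0-y)$ in terms of the integral modulus of continuity $\omega_s$ — this is the technical heart, where the rotation and the geometry of $\mathbb{S}^{n-1}$ enter; and (iii) carry out the exponent bookkeeping so that the powers of $2^kr$ cancel and the residual series is governed exactly by $\int_0^1\omega_s(\delta)\,\delta^{-1-\tau\gamma_*}\,d\delta$. The factor $\tau$, and hence the precise hypothesis $\tau\gamma_*<\beta$, arises from the optimal growth rate $\nu(2^{k+1}B)\lesssim 2^{kn\tau}\nu(B)$ of the doubling weight $\nu=\omega^q\in A_\tau$, which is exactly where assuming $\omega^{s'}\in A(p/s',q/s')$ rather than merely $\omega\in A(p,q)$ is used.
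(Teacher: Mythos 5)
Your proposal is correct and follows essentially the same route as the paper: the local/global splitting with Minkowski's inequality and the polar-coordinate bound $\big(\int_{|x-y|<CR}|\Omega(x-y)|^{\ell}|x-y|^{-(n-\alpha)\ell}dx\big)^{1/\ell}\lesssim\|\Omega\|_{L^s(\mathbb{S}^{n-1})}R^{n/\ell-(n-\alpha)}$ for the local part, and annular H\"older with exponents $s,s'$, the Ding--Lu kernel-smoothness estimate, the weighted bound for $\big(\int_{2^{k+1}B}|f|^{s'}\big)^{1/s'}$ via $\omega^{s'}\in A(p/s',q/s')$, and the growth $\omega^q(2^{k+1}B)\lesssim 2^{kn\tau}\omega^q(B)$ from $\omega^q\in A_\tau$ for the global part, with $\tau\gamma_*<\beta$ entering in exactly the same way. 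The only deviations are cosmetic (comparison constant $T_{\Omega,\alpha}f_2(x_0)$ instead of the ball average, $2B$ instead of $4B$, and a direct comparison of $\sum_k\omega_s(c2^{-k})2^{k\tau\gamma_*}$ with the Dini integral instead of the paper's insertion of $\delta^{\beta}$ factors), and your remark on defining the a priori divergent $T_{\Omega,\alpha}f$ through its oscillation is a point the paper leaves implicit.
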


\begin{rem}
When $1\le s'\leq p<n/{\alpha}$ and $1/q=1/p-{\alpha}/n$, one has
\begin{equation*}
\frac{p}{s'}\geq1 \quad \& \quad \frac{q}{s'}>1.
\end{equation*}
For the critical case $\kappa=p/q=1-{(\alpha p)}/n$ and $\gamma_*=0$, $\mathcal{C}^{\gamma_*,\ell}(\omega^q)$ reduces to the space $\mathrm{BMO}(\mathbb R^n)$, then we have that $T_{\Omega,\alpha}$ is bounded from $\mathcal{M}^{p,\kappa}(\omega^p,\omega^q)$ to $\mathrm{BMO}(\mathbb R^n)$, which was obtained in \cite{wang2}.
\end{rem}

\section{Some lemmas}
To prove our main theorems, we need some preliminary lemmas.

We first establish the following estimate, which will be often used in the sequel.
\begin{lem}\label{lem2}
Let $\Omega$ be homogeneous of degree zero and belong to $L^s(\mathbb{S}^{n-1})$ for certain $s\in(1,\infty]$, $0<\alpha<n$ and $1\leq\ell<n/{(n-\alpha)}\leq s$. Then for any $\mathcal{R}>0$, there is a constant $C>0$ independent of $\mathcal{R}$ such that
\begin{equation}\label{omega88}
\bigg(\int_{|x-y|<\mathcal{R}}\frac{|\Omega(x-y)|^{\ell}}{|x-y|^{(n-\alpha)\ell}}dx\bigg)^{1/{\ell}}
\leq C\|\Omega\|_{L^s(\mathbb{S}^{n-1})}\mathcal{R}^{n/{\ell}-(n-\alpha)}.
\end{equation}
\end{lem}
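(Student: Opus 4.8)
The plan is to estimate the integral by passing to polar coordinates centered at the point $x$ and then applying H\"older's inequality on the sphere to exploit the hypothesis $\Omega \in L^s(\mathbb{S}^{n-1})$ together with the restriction $\ell < n/(n-\alpha) \le s$. First I would make the change of variables $z = x - y$, so the left-hand side of \eqref{omega88} becomes $\big(\int_{|z|<\mathcal{R}} |\Omega(z)|^{\ell}|z|^{-(n-\alpha)\ell}\,dz\big)^{1/\ell}$. Writing $z = r z'$ with $r = |z| \in (0,\mathcal{R})$ and $z' \in \mathbb{S}^{n-1}$, and using that $\Omega$ is homogeneous of degree zero so $\Omega(z) = \Omega(z')$, the integral factors as
\begin{equation*}
\int_0^{\mathcal{R}} r^{-(n-\alpha)\ell} r^{n-1}\,dr \cdot \int_{\mathbb{S}^{n-1}} |\Omega(z')|^{\ell}\,d\sigma(z').
\end{equation*}

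Next I would handle the two factors separately. The radial integral $\int_0^{\mathcal{R}} r^{n-1-(n-\alpha)\ell}\,dr$ converges at $r=0$ precisely because the exponent satisfies $n - 1 - (n-\alpha)\ell > -1$, i.e. $(n-\alpha)\ell < n$, which is exactly the assumption $\ell < n/(n-\alpha)$; evaluating it gives a constant multiple of $\mathcal{R}^{\,n - (n-\alpha)\ell}$. For the spherical factor, since $s/\ell \ge 1$ (again from $\ell \le s$, noting $s$ may be $\infty$), H\"older's inequality on $\mathbb{S}^{n-1}$ with the normalized measure $d\sigma$ gives
\begin{equation*}
\bigg(\int_{\mathbb{S}^{n-1}} |\Omega(z')|^{\ell}\,d\sigma(z')\bigg)^{1/\ell} \le \bigg(\int_{\mathbb{S}^{n-1}} |\Omega(z')|^{s}\,d\sigma(z')\bigg)^{1/s} = \|\Omega\|_{L^s(\mathbb{S}^{n-1})},
\end{equation*}
with the obvious modification (essential supremum) when $s = \infty$. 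Multiplying the two estimates and taking the $1/\ell$-th power yields the bound $C\,\|\Omega\|_{L^s(\mathbb{S}^{n-1})}\,\mathcal{R}^{\,n/\ell - (n-\alpha)}$, with $C$ depending only on $n, \alpha, \ell, s$ (through the value of the radial integral and the surface measure of $\mathbb{S}^{n-1}$), and in particular independent of $\mathcal{R}$, as required.

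There is no serious obstacle here; the lemma is essentially a bookkeeping exercise in polar coordinates. The only point requiring a little care is verifying that the exponent conditions make the radial integral finite --- this is where the strict inequality $\ell < n/(n-\alpha)$ is used --- and correctly tracking the case $s = \infty$ in the H\"older step, where $L^s(\mathbb{S}^{n-1})$ is interpreted as $L^\infty(\mathbb{S}^{n-1})$ and $\|\Omega\|_{L^s(\mathbb{S}^{n-1})}$ as the essential supremum. One should also note that the normalization of $d\sigma$ means the constant from H\"older is simply $1$, so all constants are absorbed into $C$.
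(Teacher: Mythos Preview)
Your proof is correct and follows essentially the same route as the paper: both arguments pass to polar coordinates, use the condition $\ell<n/(n-\alpha)$ to evaluate the radial integral, and invoke H\"older's inequality on $\mathbb{S}^{n-1}$ (equivalently, the embedding $L^s(\mathbb{S}^{n-1})\hookrightarrow L^{\ell}(\mathbb{S}^{n-1})$ for $s\ge\ell$) to bound the spherical factor by $\|\Omega\|_{L^s(\mathbb{S}^{n-1})}$. The only cosmetic difference is that the paper states the H\"older step as $\|\Omega\|_{L^{\ell}(\mathbb{S}^{n-1})}\le C\|\Omega\|_{L^s(\mathbb{S}^{n-1})}$ up front before computing in polar coordinates, whereas you do the factorization first.
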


\begin{proof}
Since $\Omega\in L^s(\mathbb{S}^{n-1})$ with $s\geq n/{(n-\alpha)}>\ell$, by H\"{o}lder's inequality, we see that $\Omega\in L^{\ell}(\mathbb{S}^{n-1})$, and
\begin{equation*}
\|\Omega\|_{L^{\ell}(\mathbb{S}^{n-1})}\leq C\|\Omega\|_{L^s(\mathbb{S}^{n-1})}.
\end{equation*}
Note that $n-(n-\alpha)\ell>0$. Using polar coordinates, we can deduce that for any $\mathcal{R}>0$,

\begin{equation*}
\begin{split}
&\bigg(\int_{|x-y|<\mathcal{R}}\frac{|\Omega(x-y)|^{\ell}}{|x-y|^{(n-\alpha)\ell}}dx\bigg)^{1/{\ell}}\\
&=\bigg(\int_{0}^{\mathcal{R}}
\int_{\mathbb{S}^{n-1}}\frac{|\Omega(x')|^{\ell}}{\varrho^{(n-\alpha)\ell}}\varrho^{n-1}d\sigma(x')d\varrho\bigg)^{1/{\ell}}\\
&=\Big[\frac{1}{n-(n-\alpha)\ell}\Big]^{1/{\ell}}\cdot \mathcal{R}^{n/{\ell}-(n-\alpha)}\|\Omega\|_{L^{\ell}(\mathbb{S}^{n-1})}\\
&\leq C\cdot \mathcal{R}^{n/{\ell}-(n-\alpha)}\|\Omega\|_{L^s(\mathbb{S}^{n-1})}.
\end{split}
\end{equation*}
This proves \eqref{omega88}. Here $x'=x/{|x|}$ for any $0\neq x\in \mathbb R^n$.
\end{proof}

We need the following estimate which can be found in \cite[Lemma 1]{ding2}.

\begin{lem}\label{lem1}
Let $0<\alpha<n$. Suppose that $\Omega$ satisfies the $L^s$-Dini smoothness condition $\mathfrak{D}_{s}$ with $1<s\le\infty$. If there exists a constant $0<\vartheta<1/2$ such that if $|x|<\vartheta \mathcal{R}$, then we have
\begin{equation*}
\begin{split}
&\bigg(\int_{\mathcal{R}\leq|z|<2\mathcal{R}}
\bigg|\frac{\Omega(z-x)}{|z-x|^{n-\alpha}}-\frac{\Omega(z)}{|z|^{n-\alpha}}\bigg|^sdz\bigg)^{1/s}\\
&\leq C\cdot \mathcal{R}^{n/s-(n-\alpha)}\bigg(\frac{|x|}{\mathcal{R}}
+\int_{|x|/{2\mathcal{R}}}^{|x|/\mathcal{R}}\frac{\omega_{s}(\delta)}{\delta}d\delta\bigg),
\end{split}
\end{equation*}
where the constant $C>0$ is independent of $\mathcal{R}$ and $x$.
\end{lem}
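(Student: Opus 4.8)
The plan is to exploit the two elementary features of the annulus $\mathcal{R}\le|z|<2\mathcal{R}$ available when $|x|<\vartheta\mathcal{R}$ with $\vartheta<1/2$: since $|z-x|\ge|z|-|x|>\mathcal{R}/2$, both $|z|$ and $|z-x|$ are comparable to $\mathcal{R}$, and by the reverse triangle inequality $\big||z|-|z-x|\big|\le|x|$. I would first split the kernel difference into an angular part and a radial part,
\begin{equation*}
\frac{\Omega(z-x)}{|z-x|^{n-\alpha}}-\frac{\Omega(z)}{|z|^{n-\alpha}}
=\underbrace{\frac{\Omega(z-x)-\Omega(z)}{|z-x|^{n-\alpha}}}_{=:J_1(z)}
+\underbrace{\Omega(z)\Big(\frac{1}{|z-x|^{n-\alpha}}-\frac{1}{|z|^{n-\alpha}}\Big)}_{=:J_2(z)},
\end{equation*}
and estimate the $L^s(\{\mathcal{R}\le|z|<2\mathcal{R}\})$ norm of each piece separately. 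The radial part $J_2$ will produce the term $|x|/\mathcal{R}$, while the angular part $J_1$ will produce the Dini integral.

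For $J_2$ I would apply the mean value theorem to $t\mapsto t^{-(n-\alpha)}$: the intermediate value lies between $|z-x|$ and $|z|$, hence is $\gtrsim\mathcal{R}$, and combined with $\big||z-x|-|z|\big|\le|x|$ this gives $|J_2(z)|\lesssim|\Omega(z)|\,|x|\,\mathcal{R}^{-(n-\alpha+1)}$. Passing to polar coordinates and using the degree-zero homogeneity yields $\int_{\mathcal{R}\le|z|<2\mathcal{R}}|\Omega(z)|^s\,dz\approx\mathcal{R}^n\|\Omega\|_{L^s(\mathbb{S}^{n-1})}^s$, so that
\begin{equation*}
\Big(\int_{\mathcal{R}\le|z|<2\mathcal{R}}|J_2(z)|^s\,dz\Big)^{1/s}
\lesssim\|\Omega\|_{L^s(\mathbb{S}^{n-1})}\,\mathcal{R}^{n/s-(n-\alpha)}\cdot\frac{|x|}{\mathcal{R}},
\end{equation*}
which is of the required form.

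For $J_1$ the comparability $|z-x|\gtrsim\mathcal{R}$ lets me bound $|z-x|^{-(n-\alpha)}\lesssim\mathcal{R}^{-(n-\alpha)}$ (recall $n-\alpha>0$), reducing matters to the key spherical estimate
\begin{equation*}
\Big(\int_{\mathcal{R}\le|z|<2\mathcal{R}}|\Omega(z-x)-\Omega(z)|^s\,dz\Big)^{1/s}
\lesssim\mathcal{R}^{n/s}\Big(\frac{|x|}{\mathcal{R}}+\int_{|x|/2\mathcal{R}}^{|x|/\mathcal{R}}\frac{\omega_s(\delta)}{\delta}\,d\delta\Big).
\end{equation*}
Here I would write $z=\varrho z'$ with $\varrho\in[\mathcal{R},2\mathcal{R})$ and $z'\in\mathbb{S}^{n-1}$ and use homogeneity to write $\Omega(z-x)=\Omega\big((\varrho z'-x)'\big)$. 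The displacement on the sphere satisfies $\big|(\varrho z'-x)'-z'\big|\lesssim|x|/\varrho$ by the reverse triangle inequality, so for each $\varrho$ I would introduce a rotation $\rho_\varrho$ with $|\rho_\varrho|\lesssim|x|/\varrho$ realizing this displacement and, using the rotation invariance of $d\sigma$, bound the inner spherical oscillation by $\omega_s(C|x|/\varrho)$ up to a remainder of size $|x|/\varrho$ coming from the radial slippage. Integrating in $\varrho$ over $[\mathcal{R},2\mathcal{R})$ and substituting $\delta=|x|/\varrho$ (so that $\varrho^{n-1}\,d\varrho\approx\mathcal{R}^{n}\,\delta^{-1}\,d\delta$ on this range, and $\varrho\in[\mathcal{R},2\mathcal{R})$ corresponds to $\delta\in[|x|/2\mathcal{R},|x|/\mathcal{R}]$) converts the $\omega_s(C|x|/\varrho)$ contribution into $\int_{|x|/2\mathcal{R}}^{|x|/\mathcal{R}}\omega_s(\delta)\,\delta^{-1}\,d\delta$, whereas the remainder integrates to $|x|/\mathcal{R}$.

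I expect this last step to be the main obstacle: the map $z'\mapsto(\varrho z'-x)'$ induced by a translation is \emph{not} a rotation, and its angular shift varies from point to point, whereas $\omega_s$ is defined through a single rotation applied uniformly to the whole sphere. Making the passage rigorous requires comparing the induced diffeomorphism of $\mathbb{S}^{n-1}$ with a genuine rotation of comparable size, controlling the attendant Jacobian, and—so as to land on the exponent $1$ (rather than $s$) inside the $\delta$-integral—working at the level of $L^s$ norms via Minkowski's integral inequality together with the monotonicity and a doubling-type behaviour of $\omega_s$; this is precisely the technical content of \cite[Lemma 1]{ding2}, from which the estimate is quoted.
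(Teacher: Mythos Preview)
The paper does not prove this lemma at all: it is simply quoted from \cite[Lemma~1]{ding2}, so there is no in-paper argument to compare against. Your sketch correctly recovers the standard architecture of that proof---splitting into a radial piece handled by the mean value theorem (yielding the $|x|/\mathcal{R}$ term) and an angular piece governed by the modulus of continuity (yielding the Dini integral)---and you correctly identify both the main technical obstacle (that the translation-induced map on the sphere is not a rotation, so some care is needed to pass to $\omega_s$) and the fact that this is exactly what \cite{ding2} supplies. In that sense your proposal is consistent with, and in fact more explicit than, what the paper itself does.
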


Based on Lemma \ref{lem1}, we give the following estimate.
\begin{lem}\label{lem3}
Let $0<\alpha<n$. Suppose that $\Omega$ satisfies the $L^{s;\beta}$-Dini smoothness condition $\mathfrak{D}_{s;\beta}$ with $1<s\le\infty$ and $0<\beta\leq1$. Then for any ball $\mathcal{B}=B(x_0,r)$ and for any $x,y\in \mathcal{B}$,
\begin{equation}\label{wewant0}
\begin{split}
&\bigg(\int_{2^{j+1}\mathcal{B}\backslash 2^j\mathcal{B}}
\left|\frac{\Omega(x-z)}{|x-z|^{n-\alpha}}-\frac{\Omega(y-z)}{|y-z|^{n-\alpha}}\right|^sdz\bigg)^{1/s}\\
&\leq C\cdot\big(2^jr\big)^{n/s-(n-\alpha)}
\bigg[\frac{1}{2^{j-1}}+\frac{1}{2^{j\beta}}
\int_{|x-x_0|/{(2^{j+1}r)}}^{|x-x_0|/{(2^jr)}}\frac{\omega_s(\delta)}{\delta^{1+\beta}}d\delta
+\frac{1}{2^{j\beta}}
\int_{|y-x_0|/{(2^{j+1}r)}}^{|y-x_0|/{(2^jr)}}\frac{\omega_s(\delta)}{\delta^{1+\beta}}d\delta\bigg].
\end{split}
\end{equation}
Here $2\leq j\in \mathbb{N}$ and $\omega_s$ is as in \eqref{964} or \eqref{965}.
\end{lem}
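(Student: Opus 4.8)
The plan is to reduce everything to Lemma~\ref{lem1} via a triangle inequality and a careful dyadic/scaling argument. Fix a ball $\mathcal{B}=B(x_0,r)$ and points $x,y\in\mathcal{B}$, and fix $j\ge 2$. Write the annulus $2^{j+1}\mathcal{B}\setminus 2^j\mathcal{B}$ as $\{z:2^jr\le|z-x_0|<2^{j+1}r\}$. I would first insert the "pivot" term $\Omega(x_0-z)/|x_0-z|^{n-\alpha}$ and use Minkowski's inequality in $L^s(dz)$ to split the left-hand side into
\[
\Big\|\tfrac{\Omega(x-\cdot)}{|x-\cdot|^{n-\alpha}}-\tfrac{\Omega(x_0-\cdot)}{|x_0-\cdot|^{n-\alpha}}\Big\|_{L^s(A_j)}
+\Big\|\tfrac{\Omega(y-\cdot)}{|y-\cdot|^{n-\alpha}}-\tfrac{\Omega(x_0-\cdot)}{|x_0-\cdot|^{n-\alpha}}\Big\|_{L^s(A_j)},
\]
where $A_j$ is the annulus. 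By symmetry it then suffices to estimate the first term, with $x$ playing the role of the perturbation of the center $x_0$.

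Next I would set up Lemma~\ref{lem1} with $\mathcal{R}=2^jr$ and with the translated variable: since $x,y\in B(x_0,r)$ we have $|x-x_0|<r$, hence $|x-x_0|<\vartheta\,\mathcal{R}$ holds as soon as $2^j\vartheta>1$, i.e. for all $j\ge 2$ provided $\vartheta$ is chosen appropriately (this is exactly the regime $j\ge 2$ in the statement; for small $j$ one would need a separate trivial bound, which is why the lemma restricts to $j\ge2$). Translating by $x_0$, Lemma~\ref{lem1} gives
\[
\Big\|\tfrac{\Omega(x-\cdot)}{|x-\cdot|^{n-\alpha}}-\tfrac{\Omega(x_0-\cdot)}{|x_0-\cdot|^{n-\alpha}}\Big\|_{L^s(A_j)}
\le C\,(2^jr)^{n/s-(n-\alpha)}\bigg(\frac{|x-x_0|}{2^jr}+\int_{|x-x_0|/(2^{j+1}r)}^{|x-x_0|/(2^jr)}\frac{\omega_s(\delta)}{\delta}\,d\delta\bigg).
\]
The first term is bounded by $(2^jr)^{n/s-(n-\alpha)}\cdot 2^{-j}$ since $|x-x_0|<r$, which matches (up to the harmless factor $2$) the $2^{-(j-1)}$ term in the claim.

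The remaining point — and the one genuine step beyond Lemma~\ref{lem1} — is to upgrade the Dini integral $\int \omega_s(\delta)/\delta\,d\delta$ to the $\beta$-weighted integral $\int \omega_s(\delta)/\delta^{1+\beta}\,d\delta$ at the cost of the factor $2^{-j\beta}$. On the interval of integration we have $\delta<|x-x_0|/(2^jr)<1/2^j$, hence $\delta^\beta<2^{-j\beta}$, so $1/\delta\le 2^{-j\beta}\cdot\delta^{-1-\beta}$ on that interval; substituting this pointwise bound inside the integral yields exactly
\[
\int_{|x-x_0|/(2^{j+1}r)}^{|x-x_0|/(2^jr)}\frac{\omega_s(\delta)}{\delta}\,d\delta
\le \frac{1}{2^{j\beta}}\int_{|x-x_0|/(2^{j+1}r)}^{|x-x_0|/(2^jr)}\frac{\omega_s(\delta)}{\delta^{1+\beta}}\,d\delta.
\]
Combining the two estimates for the $x$-term, doing the same for the $y$-term, and adding, we arrive at \eqref{wewant0}. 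I do not expect any serious obstacle here; the only thing to be careful about is the bookkeeping on the constants and the verification that the hypothesis $|x-x_0|<\vartheta\mathcal{R}$ of Lemma~\ref{lem1} is met for the stated range $j\ge2$ (one may need to enlarge the implicit constant or restrict $\vartheta<1/4$ so that $2^j\vartheta>1$ for $j\ge2$). The passage from $\mathfrak{D}_s$ (needed to invoke Lemma~\ref{lem1}) to $\mathfrak{D}_{s;\beta}$ is only used to guarantee the finiteness of the right-hand side and, via Remark~\ref{remark10}, that the hypothesis of Lemma~\ref{lem1} is automatically satisfied.
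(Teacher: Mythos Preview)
Your proposal is correct and follows essentially the same route as the paper: insert the pivot $\Omega(x_0-z)/|x_0-z|^{n-\alpha}$, apply Minkowski, invoke Lemma~\ref{lem1} with $\mathcal{R}=2^jr$ after translating by $x_0$, and then upgrade $\omega_s(\delta)/\delta$ to $\omega_s(\delta)/\delta^{1+\beta}$ via the pointwise bound $\delta^\beta\le (|x-x_0|/2^jr)^\beta<2^{-j\beta}$ on the interval of integration. One small slip: to ensure $|x-x_0|<\vartheta\cdot 2^jr$ for $j\ge 2$ you need $\vartheta>1/4$ (not $\vartheta<1/4$), and the paper simply records this as $|x-x_0|<\tfrac14|z-x_0|$, which is compatible with the hypothesis $\vartheta<1/2$ of Lemma~\ref{lem1}.
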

\begin{proof}
For any ball $\mathcal{B}=B(x_0,r)$ and $0<\alpha<n$, since
\begin{equation*}
\begin{split}
\left|\frac{\Omega(x-z)}{|x-z|^{n-\alpha}}-\frac{\Omega(y-z)}{|y-z|^{n-\alpha}}\right|
&\leq\left|\frac{\Omega(x-z)}{|x-z|^{n-\alpha}}-\frac{\Omega(x_0-z)}{|x_0-z|^{n-\alpha}}\right|\\
&+\left|\frac{\Omega(x_0-z)}{|x_0-z|^{n-\alpha}}-\frac{\Omega(y-z)}{|y-z|^{n-\alpha}}\right|,
\end{split}
\end{equation*}
it then follows from Minkowski's inequality that for each $j\geq2$,
\begin{equation*}
\begin{split}
&\bigg(\int_{2^{j+1}\mathcal{B}\backslash 2^j\mathcal{B}}
\left|\frac{\Omega(x-z)}{|x-z|^{n-\alpha}}-\frac{\Omega(y-z)}{|y-z|^{n-\alpha}}\right|^sdz\bigg)^{1/s}\\
\leq&
\bigg(\int_{2^{j+1}\mathcal{B}\backslash 2^j\mathcal{B}}\left|\frac{\Omega(x-z)}{|x-z|^{n-\alpha}}-\frac{\Omega(x_0-z)}{|x_0-z|^{n-\alpha}}\right|^sdz\bigg)^{1/s}\\
&+\bigg(\int_{2^{j+1}\mathcal{B}\backslash 2^j\mathcal{B}}
\left|\frac{\Omega(x_0-z)}{|x_0-z|^{n-\alpha}}-\frac{\Omega(y-z)}{|y-z|^{n-\alpha}}\right|^sdz\bigg)^{1/s}.
\end{split}
\end{equation*}
For any given $x\in\mathcal{B}$ and $z\in 2^{j+1}\mathcal{B}\backslash 2^j\mathcal{B}$ with $j\geq2$, it is easy to verify that
\begin{equation*}
|x-x_0|<\frac{\,1\,}{4}|z-x_0|.
\end{equation*}
As pointed out in Remark \ref{remark10}, for any $0<\beta\leq 1$ and $1<s\le\infty$, the condition $\mathfrak{D}_{s;\beta}$ is stronger than the $L^s$-Dini condition $\mathfrak{D}_{s}$. According to Lemma \ref{lem1}(here we take $\mathcal{R}=2^jr$), we get
\begin{equation*}
\begin{split}
&\bigg(\int_{2^{j+1}\mathcal{B}\backslash2^j\mathcal{B}}
\left|\frac{\Omega(x-z)}{|x-z|^{n-\alpha}}-\frac{\Omega(x_0-z)}{|x_0-z|^{n-\alpha}}\right|^sdz\bigg)^{1/s}\\
&=\bigg(\int_{2^jr\leq|z-x_0|<2^{j+1}r}\left|\frac{\Omega(z-x_0-(x-x_0))}{|z-x_0-(x-x_0)|^{n-\alpha}}
-\frac{\Omega(z-x_0)}{|z-x_0|^{n-\alpha}}\right|^sdz\bigg)^{1/s}\\
&\leq C\cdot\big(2^jr\big)^{n/s-(n-\alpha)}\bigg[\frac{|x-x_0|}{2^jr}+\int_{|x-x_0|/{(2^{j+1}r)}}^{|x-x_0|/{(2^jr)}}
\frac{\omega_s(\delta)}{\delta}d\delta\bigg].
\end{split}
\end{equation*}
Observe that for any $0<\beta\leq1$,
\begin{equation*}
\int_{|x-x_0|/{(2^{j+1}r)}}^{|x-x_0|/{(2^jr)}}\frac{\omega_s(\delta)}{\delta}d\delta
\leq\Big(\frac{|x-x_0|}{2^{j}r}\Big)^{\beta}
\int_{|x-x_0|/{(2^{j+1}r)}}^{|x-x_0|/{(2^jr)}}\frac{\omega_s(\delta)}{\delta^{1+\beta}}d\delta.
\end{equation*}
Hence, for any given $x\in \mathcal{B}=B(x_0,r)$,
\begin{equation}\label{wewant1}
\begin{split}
&\bigg(\int_{2^{j+1}\mathcal{B}\backslash2^j\mathcal{B}}
\left|\frac{\Omega(x-z)}{|x-z|^{n-\alpha}}-\frac{\Omega(x_0-z)}{|x_0-z|^{n-\alpha}}\right|^sdz\bigg)^{1/s}\\
&\leq C\cdot\big(2^jr\big)^{n/s-(n-\alpha)}\bigg[\frac{|x-x_0|}{2^jr}+\Big(\frac{|x-x_0|}{2^{j}r}\Big)^{\beta}
\int_{|x-x_0|/{(2^{j+1}r)}}^{|x-x_0|/{(2^jr)}}\frac{\omega_s(\delta)}{\delta^{1+\beta}}d\delta\bigg]\\
&\leq C\cdot\big(2^jr\big)^{n/s-(n-\alpha)}\bigg[\frac{1}{2^j}+\frac{1}{2^{j\beta}}
\int_{|x-x_0|/{(2^{j+1}r)}}^{|x-x_0|/{(2^jr)}}\frac{\omega_s(\delta)}{\delta^{1+\beta}}d\delta\bigg].
\end{split}
\end{equation}
Similarly, for any given $y\in \mathcal{B}=B(x_0,r)$, we can also obtain
\begin{equation}\label{wewant2}
\begin{split}
&\bigg(\int_{2^{j+1}\mathcal{B}\backslash 2^j\mathcal{B}}
\left|\frac{\Omega(x_0-z)}{|x_0-z|^{n-\alpha}}-\frac{\Omega(y-z)}{|y-z|^{n-\alpha}}\right|^sdz\bigg)^{1/s}\\
&\leq C\cdot\big(2^jr\big)^{n/s-(n-\alpha)}
\bigg[\frac{|y-x_0|}{2^jr}+\Big(\frac{|y-x_0|}{2^{j}r}\Big)^{\beta}
\int_{|y-x_0|/{(2^{j+1}r)}}^{|y-x_0|/{(2^jr)}}\frac{\omega_s(\delta)}{\delta^{1+\beta}}d\delta\bigg]\\
&\leq C\cdot\big(2^jr\big)^{n/s-(n-\alpha)}
\bigg[\frac{1}{2^j}+\frac{1}{2^{j\beta}}
\int_{|y-x_0|/{(2^{j+1}r)}}^{|y-x_0|/{(2^jr)}}\frac{\omega_s(\delta)}{\delta^{1+\beta}}d\delta\bigg].
\end{split}
\end{equation}
Combining the above estimates \eqref{wewant1} and \eqref{wewant2} leads to the estimate \eqref{wewant0}, which completes the proof of Lemma \ref{lem3}.
\end{proof}

Finally, we also need the following known results about the class of $A_p$ and the class of $A(p,q)$,
which can be found in \cite[Chapter 7]{grafakos} and \cite[Chapter 3]{lu}.
\begin{lem}\label{lem4}
Let $1\leq p<\infty$ and $\omega\in A_p$. Then we have
\begin{enumerate}
  \item $\omega(x)\,dx$ is a doubling measure. To be more precise, for any ball $\mathcal{B}$ in $\mathbb R^n$,
\begin{equation*}
\omega(2\mathcal{B})\leq C\cdot\omega(\mathcal{B}),
\end{equation*}
where the constant $C>0$ is independent of $\mathcal{B}$.
  \item For any $\lambda>1$ and for any ball $\mathcal{B}$ in $\mathbb R^n$,
\begin{equation*}
\omega(\lambda\mathcal{B})\leq \lambda^{np}[\omega]_{A_p}\cdot\omega(\mathcal{B}).
\end{equation*}
\end{enumerate}
\end{lem}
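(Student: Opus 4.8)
The plan is to deduce both assertions from a single "portion inequality" that compares the Lebesgue measure and the $\omega$-measure of a ball and one of its subsets, and then to specialize. Concretely, I would first establish that for every ball $B$ and every measurable set $E\subseteq B$,
\begin{equation*}
\Big(\frac{m(E)}{m(B)}\Big)^p\leq[\omega]_{A_p}\frac{\omega(E)}{\omega(B)}.
\end{equation*}
For $1<p<\infty$ this comes from H\"older's inequality: writing $m(E)=\int_E\omega^{1/p}\cdot\omega^{-1/p}\,dx$ and applying H\"older with exponents $p$ and $p'$ gives $m(E)\leq\omega(E)^{1/p}\big(\int_E\omega^{1-p'}\,dx\big)^{1/{p'}}$, where I have used the identity $-p'/p=1-p'$. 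Enlarging the last integral from $E$ to $B$ and invoking the $A_p$ condition in the form $\big(\int_B\omega^{1-p'}\,dx\big)^{p-1}\leq[\omega]_{A_p}\,m(B)^p/\omega(B)$, I then raise to the power $1/{p'}$ (using $(p-1)p'=p$) and rearrange to obtain the displayed inequality. For the endpoint $p=1$ the same inequality follows more directly: the $A_1$ condition yields $\omega(B)\leq[\omega]_{A_1}\,m(B)\big[\underset{x\in B}{\mathrm{ess\,inf}}\,\omega\big]$, and since $E\subseteq B$ one has $\underset{x\in B}{\mathrm{ess\,inf}}\,\omega\leq\underset{x\in E}{\mathrm{ess\,inf}}\,\omega\leq\omega(E)/m(E)$, which gives the claim with exponent $p=1$.

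Next I would apply the portion inequality with the ambient ball taken to be $\lambda\mathcal{B}$ and the subset taken to be $\mathcal{B}\subseteq\lambda\mathcal{B}$. Since the two balls are concentric with radii differing by the factor $\lambda$, one has $m(\mathcal{B})/m(\lambda\mathcal{B})=\lambda^{-n}$, so the inequality reads
\begin{equation*}
\lambda^{-np}\leq[\omega]_{A_p}\frac{\omega(\mathcal{B})}{\omega(\lambda\mathcal{B})},
\end{equation*}
which is exactly assertion (2) after rearranging (note $\omega(\mathcal{B})\in(0,\infty)$ for any ball, since $\omega$ is a positive locally integrable weight, so the division is legitimate). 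Assertion (1) is then the special case $\lambda=2$, giving $\omega(2\mathcal{B})\leq 2^{np}[\omega]_{A_p}\,\omega(\mathcal{B})$, so that $C=2^{np}[\omega]_{A_p}$ serves as a doubling constant independent of $\mathcal{B}$.

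There is no serious obstacle here; this is a standard consequence of the Muckenhoupt condition, and the principal content is the portion inequality itself. The only points that require care are the exponent arithmetic in the H\"older step (verifying $-p'/p=1-p'$ and $(p-1)p'=p$) and the separate, simpler treatment of the endpoint $p=1$, where the dual integral is replaced by the essential infimum. I would therefore present the portion inequality as the single main computation and obtain both stated properties as immediate corollaries.
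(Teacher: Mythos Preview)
Your proof is correct and follows the standard textbook route via the ``portion inequality'' $\big(m(E)/m(B)\big)^p\leq[\omega]_{A_p}\,\omega(E)/\omega(B)$; the exponent arithmetic and the separate $p=1$ case are handled accurately. Note, however, that the paper does not supply its own proof of this lemma at all: it simply cites the result as known from \cite[Chapter~7]{grafakos} and \cite[Chapter~3]{lu}, so there is no in-paper argument to compare against---your approach is precisely the one found in those references.
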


\begin{lem}\label{lem5}
Let $0<\alpha<n$, $1\leq p<n/{\alpha}$ and $1/q=1/p-{\alpha}/n$. Then we have
\begin{enumerate}
  \item If $p>1$, then
\begin{equation*}
\omega\in A(p,q)\Longleftrightarrow \omega^q\in A_{\tau},
\end{equation*}
where $\tau=1+q/{p'}$.
  \item If $p=1$, then
\begin{equation*}
\omega\in A(1,q)\Longleftrightarrow \omega^q\in A_1.
\end{equation*}
\end{enumerate}
\end{lem}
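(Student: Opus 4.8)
### Proof plan for Lemma \ref{lem5}

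The plan is to reduce everything to the corresponding characterizations of $A_p$ weights by exploiting the relation $1/q = 1/p - \alpha/n$. Let me first fix notation: write $\sigma = \omega^{-p'}$ (the dual weight), and observe that the $A(p,q)$ condition reads
\begin{equation*}
\sup_B \bigg(\frac{1}{m(B)}\int_B \omega^q\bigg)^{1/q}\bigg(\frac{1}{m(B)}\int_B \omega^{-p'}\bigg)^{1/p'} < \infty.
\end{equation*}
Raising to the power $q$, this is equivalent to
\begin{equation*}
\sup_B \bigg(\frac{1}{m(B)}\int_B \omega^q\bigg)\bigg(\frac{1}{m(B)}\int_B \omega^{-p'}\bigg)^{q/p'} < \infty.
\end{equation*}
So the whole problem is to show that the exponent $q/p'$ appearing here equals $\tau - 1$ where $\tau = 1 + q/p'$, and that this $\tau$ is the right Muckenhoupt exponent for the weight $\omega^q$, i.e. that $(\omega^q)^{1-\tau'} = \omega^{-p'}$. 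That is a pure bookkeeping identity among the exponents.

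First I would handle the case $p > 1$. Set $\tau = 1 + q/p'$; since $q > p > 1$ we have $\tau > 1$, so $A_\tau$ makes sense. The $A_\tau$ condition for the weight $W := \omega^q$ is
\begin{equation*}
\sup_B \bigg(\frac{1}{m(B)}\int_B W\bigg)\bigg(\frac{1}{m(B)}\int_B W^{1-\tau'}\bigg)^{\tau-1} < \infty.
\end{equation*}
The key computation is to check that $W^{1-\tau'} = \omega^{-p'}$ and that $\tau - 1 = q/p'$. The second is immediate from the definition of $\tau$. For the first, I compute $\tau' = \tau/(\tau-1) = (1 + q/p')/(q/p') = 1 + p'/q$, hence $1 - \tau' = -p'/q$, so $W^{1-\tau'} = \omega^{q\cdot(-p'/q)} = \omega^{-p'}$, as desired. (Here is the one place the hypothesis $1/q = 1/p - \alpha/n$ is not actually needed for the algebraic identity — it is needed only to guarantee $q > p$, hence $\tau > 1$, hence that we are in the legitimate range of $A_\tau$; I should remark on this.) Substituting, the $A_\tau$ condition for $\omega^q$ becomes exactly the $q$-th power of the $A(p,q)$ condition written above. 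Since both sides are finite or infinite together, this gives the claimed equivalence $\omega \in A(p,q) \Longleftrightarrow \omega^q \in A_\tau$.

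Next, the case $p = 1$. Now $A(1,q)$ is defined by
\begin{equation*}
\sup_B \bigg(\frac{1}{m(B)}\int_B \omega^q\bigg)^{1/q}\Big[\operatorname*{ess\,sup}_{x\in B}\frac{1}{\omega(x)}\Big] < \infty,
\end{equation*}
equivalently (raising to power $q$ and noting $[\operatorname{ess\,sup}_B \omega^{-1}]^q = [\operatorname{ess\,inf}_B \omega^q]^{-1} = \operatorname{ess\,sup}_B \omega^{-q}$)
\begin{equation*}
\sup_B \bigg(\frac{1}{m(B)}\int_B \omega^q\bigg)\Big[\operatorname*{ess\,inf}_{x\in B}\omega^q(x)\Big]^{-1} < \infty,
\end{equation*}
which is precisely the $A_1$ condition for $W = \omega^q$. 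This is even more direct than the $p>1$ case; formally it is the $\tau = 1$ endpoint of the previous computation, since $q/p' = q/\infty = 0$ when $p = 1$.

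The main obstacle here is essentially nonexistent — this is a standard, purely algebraic lemma, and the only thing to be careful about is the exponent arithmetic ($\tau' = 1 + p'/q$ and the verification $W^{1-\tau'} = \omega^{-p'}$) together with correctly converting the essential-supremum formulation of $A(1,q)$ into the essential-infimum formulation of $A_1$ via $[\operatorname{ess\,sup}_B \omega^{-1}]^q = [\operatorname{ess\,inf}_B \omega^q]^{-1}$. Since the excerpt explicitly says this lemma "can be found in \cite[Chapter 7]{grafakos} and \cite[Chapter 3]{lu}", the cleanest presentation is simply to carry out the two exponent identities above and cite those references for the details of the remaining equivalences, rather than reproving the self-improvement properties of Muckenhoupt weights from scratch.
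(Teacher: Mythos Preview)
Your argument is correct. The paper itself does not prove this lemma at all: it simply states the result and cites \cite[Chapter~7]{grafakos} and \cite[Chapter~3]{lu} as references, so there is no ``paper's own proof'' to compare against. What you have written is precisely the standard algebraic verification that those references contain --- the key identities $\tau-1=q/p'$ and $(\omega^q)^{1-\tau'}=\omega^{-p'}$ show that the $A_\tau$ condition for $\omega^q$ is literally the $q$-th power of the $A(p,q)$ condition, and the $p=1$ case is the obvious endpoint. Your remark that the relation $1/q=1/p-\alpha/n$ is not needed for the algebraic identity itself (only to ensure $q>p$ so that $\tau>1$) is also correct and worth noting.
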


\section{Proof of Theorem $\ref{thm1}$}
\begin{proof}[Proof of Theorem $\ref{thm1}$]
Let $f\in L^{p}(\omega^p)$ with $s'<p<\infty$ and $\omega^{s'}\in A(p/{s'},\infty)$. By definition, it suffices to verify that for any ball $\mathcal{B}=B(x_0,r)$ with $(x_0,r)\in\mathbb R^n\times(0,\infty)$ and $f\in L^{p}(\omega^p)$, the following estimate holds:
\begin{equation}\label{mainw}
\Big[\underset{y\in \mathcal{B}}{\mathrm{ess\,sup}}\,\omega(y)\Big]\cdot\frac{1}{m(\mathcal{B})^{\gamma/n}}
\bigg(\frac{1}{m(\mathcal{B})}\int_{\mathcal{B}}\big|T_{\Omega,\alpha}f(x)-(T_{\Omega,\alpha}f)_{\mathcal{B}}\big|^{\ell}dx\bigg)^{1/{\ell}}
\lesssim\|f\|_{L^{p}(\omega^p)}.
\end{equation}
For this purpose, we decompose $f$ as
\begin{equation*}
f(x)=f(x)\cdot\chi_{4\mathcal{B}}(x)+f(x)\cdot\chi_{(4\mathcal{B})^{\complement}}(x):=f_1(x)+f_2(x),
\end{equation*}
where $4\mathcal{B}=B(x_0,4r)$. By the linearity of $T_{\Omega,\alpha}$, we have
\begin{equation*}
T_{\Omega,\alpha}f(x)=T_{\Omega,\alpha}f_1(x)+T_{\Omega,\alpha}f_2(x)\quad \& \quad
(T_{\Omega,\alpha}f)_{\mathcal{B}}=(T_{\Omega,\alpha}f_1)_{\mathcal{B}}+(T_{\Omega,\alpha}f_2)_{\mathcal{B}}.
\end{equation*}
Then for $\ell\geq1$, by Minkowski's inequality, we can write
\begin{equation*}
\begin{split}
&\Big[\underset{y\in \mathcal{B}}{\mathrm{ess\,sup}}\,\omega(y)\Big]\cdot
\frac{1}{m(\mathcal{B})^{\gamma/n}}
\bigg(\frac{1}{m(\mathcal{B})}\int_{\mathcal{B}}\big|T_{\Omega,\alpha}f(x)-(T_{\Omega,\alpha}f)_{\mathcal{B}}\big|^{\ell}dx\bigg)^{1/{\ell}}\\
&\leq\Big[\underset{y\in \mathcal{B}}{\mathrm{ess\,sup}}\,\omega(y)\Big]\cdot
\frac{1}{m(\mathcal{B})^{\gamma/n+1/{\ell}}}
\bigg(\int_{\mathcal{B}}\big|T_{\Omega,\alpha}f_1(x)-(T_{\Omega,\alpha}f_1)_{\mathcal{B}}\big|^{\ell}\,dx\bigg)^{1/{\ell}}\\
&+\Big[\underset{y\in \mathcal{B}}{\mathrm{ess\,sup}}\,\omega(y)\Big]\cdot
\frac{1}{m(\mathcal{B})^{\gamma/n+1/{\ell}}}
\bigg(\int_{\mathcal{B}}\big|T_{\Omega,\alpha}f_2(x)-(T_{\Omega,\alpha}f_2)_{\mathcal{B}}\big|^{\ell}\,dx\bigg)^{1/{\ell}}\\
&:=\mathrm{I+II}.
\end{split}
\end{equation*}
Let us first consider the term $\mathrm{I}$. For $\ell\geq1$, by using H\"{o}lder's inequality, we have
\begin{equation}\label{doub}
\begin{split}
\big|(T_{\Omega,\alpha}f_1)_{\mathcal{B}}\big|
&\leq\frac{1}{m(\mathcal{B})}\int_{\mathcal{B}}\big|T_{\Omega,\alpha}f_1(y)\big|\,dy\\
&\leq\bigg(\frac{1}{m(\mathcal{B})}\int_{\mathcal{B}}\big|T_{\Omega,\alpha}f_1(y)\big|^{\ell}\,dy\bigg)^{1/{\ell}}.
\end{split}
\end{equation}
Observe that when $x\in \mathcal{B}$ and $y\in 4\mathcal{B}$, we have $|x-y|<5r$. Applying Minkowski's integral inequality and \eqref{doub}, we can see that
\begin{equation*}
\begin{split}
\mathrm{I}&\leq\Big[\underset{y\in \mathcal{B}}{\mathrm{ess\,sup}}\,\omega(y)\Big]\cdot
\frac{2}{m(\mathcal{B})^{\gamma/n+1/{\ell}}}\bigg(\int_{\mathcal{B}}\big|T_{\Omega,\alpha}f_1(x)\big|^{\ell}\,dx\bigg)^{1/{\ell}}\\
&\leq\Big[\underset{y\in \mathcal{B}}{\mathrm{ess\,sup}}\,\omega(y)\Big]\cdot
\frac{2}{m(\mathcal{B})^{\gamma/n+1/{\ell}}}
\bigg(\int_{\mathcal{B}}\bigg[\int_{4\mathcal{B}}\frac{|\Omega(x-y)|}{|x-y|^{n-\alpha}}|f(y)|\,dy\bigg]^{\ell}dx\bigg)^{1/{\ell}}\\
&\leq\Big[\underset{y\in \mathcal{B}}{\mathrm{ess\,sup}}\,\omega(y)\Big]\cdot
\frac{2}{m(\mathcal{B})^{\gamma/n+1/{\ell}}}
\int_{4\mathcal{B}}|f(y)|\bigg(\int_{|x-y|<5r}\frac{|\Omega(x-y)|^{\ell}}{|x-y|^{(n-\alpha)\ell}}\,dx\bigg)^{1/{\ell}}dy.
\end{split}
\end{equation*}
Notice that $1\leq\ell<n/{(n-\alpha)}$. It follows directly from Lemma \ref{lem2} that for any $y\in 4\mathcal{B}$,
\begin{equation}\label{cal2}
\begin{split}
\bigg(\int_{|x-y|<5r}\frac{|\Omega(x-y)|^{\ell}}{|x-y|^{(n-\alpha)\ell}}\,dx\bigg)^{1/{\ell}}
&\leq C\cdot(5r)^{n/{\ell}-(n-\alpha)}\|\Omega\|_{L^s(\mathbb{S}^{n-1})}\\
&\lesssim m(\mathcal{B})^{1/{\ell}-(1-\alpha/n)}\|\Omega\|_{L^s(\mathbb{S}^{n-1})}.
\end{split}
\end{equation}
On the other hand, for $1\leq s'<p<\infty$, it follows from H\"older's inequality that
\begin{equation}\label{wang2}
\begin{split}
&\int_{4\mathcal{B}}|f(y)|\,dy=\int_{4\mathcal{B}}|f(y)|\omega(y)\cdot\omega(y)^{-1}\,dy\\
&\leq\bigg(\int_{4\mathcal{B}}|f(y)|^p\omega(y)^p\,dy\bigg)^{1/p}\cdot\bigg(\int_{4\mathcal{B}}\omega(y)^{-p'}dy\bigg)^{1/{p'}}.
\end{split}
\end{equation}
Notice that
\begin{equation}\label{cal3}
s'\cdot\big(p/{s'}\big)'=s'\cdot\frac{1}{1-\frac{s'}{p}}=\frac{ps'}{p-s'},
\end{equation}
and
\begin{equation}\label{cal4}
\frac{1}{s'\cdot(p/{s'})'}=\frac{1}{s'}\cdot\Big(1-\frac{1}{p/{s'}}\Big)=\frac{1}{s'}-\frac{\,1\,}{p}.
\end{equation}
By a simple calculation, we can see that when $s'\geq1$,
\begin{equation}\label{cal5}
\frac{ps'}{p-s'}\geq\frac{p}{p-1}=p'.
\end{equation}
Using H\"older's inequality, \eqref{cal3} and \eqref{cal5}, we get
\begin{equation}\label{wang3}
\begin{split}
\bigg(\frac{1}{m(4\mathcal{B})}\int_{4\mathcal{B}} \omega(y)^{-p'}dy\bigg)^{1/{p'}}
&\leq\bigg(\frac{1}{m(4\mathcal{B})}\int_{4\mathcal{B}} \omega(y)^{-s'(p/{s'})'}dy\bigg)^{\frac{1}{s'(p/{s'})'}}\\
&=\bigg(\frac{1}{m(4\mathcal{B})}\int_{4\mathcal{B}}\omega^{s'}(y)^{-(p/{s'})'}dy\bigg)^{\frac{1}{s'(p/{s'})'}}.
\end{split}
\end{equation}
Noting that
\begin{equation*}
\frac{\gamma}{\,n\,}=\frac{\alpha}{\,n\,}-\frac{1}{\,p\,}=\frac{1}{p'}+\frac{\alpha}{\,n\,}-1.
\end{equation*}
Combining the above estimates \eqref{cal2}, \eqref{wang2} and \eqref{wang3} leads to
\begin{equation*}
\begin{split}
\mathrm{I}&\lesssim\|\Omega\|_{L^s(\mathbb{S}^{n-1})}\cdot
\frac{m(4\mathcal{B})^{1/{p'}}}{m(\mathcal{B})^{\gamma/n+1-\alpha/n}}
\bigg(\int_{4\mathcal{B}}|f(y)|^p\omega(y)^{p}\,dy\bigg)^{1/p}\\
&\times\Big[\underset{y\in \mathcal{B}}{\mathrm{ess\,sup}}\,\omega(y)\Big]
\bigg(\frac{1}{m(4\mathcal{B})}\int_{4\mathcal{B}}\omega^{s'}(y)^{-(p/{s'})'}dy\bigg)^{\frac{1}{s'(p/{s'})'}}\\
&\lesssim\|\Omega\|_{L^s(\mathbb{S}^{n-1})}\|f\|_{L^p(\omega^p)}\\
&\times\Big[\underset{y\in 4\mathcal{B}}{\mathrm{ess\,sup}}\,\omega(y)\Big]
\bigg(\frac{1}{m(4\mathcal{B})}\int_{4\mathcal{B}}\omega^{s'}(y)^{-(p/{s'})'}dy\bigg)^{\frac{1}{s'(p/{s'})'}}\\
&=\|\Omega\|_{L^s(\mathbb{S}^{n-1})}\|f\|_{L^p(\omega^p)}\\
&\times\bigg[\Big[\underset{y\in 4\mathcal{B}}{\mathrm{ess\,sup}}\,\omega^{s'}(y)\Big]
\bigg(\frac{1}{m(4\mathcal{B})}\int_{4\mathcal{B}}\omega^{s'}(y)^{-(p/{s'})'}dy\bigg)^{\frac{1}{(p/{s'})'}}\bigg]^{\frac{1}{s'}}\\
&\leq C\|\Omega\|_{L^s(\mathbb{S}^{n-1})}\|f\|_{L^p(\omega^p)},
\end{split}
\end{equation*}
where in the last inequality we have invoked the condition $\omega^{s'}\in A(p/{s'},\infty)$. This gives the desired result.
Let us now turn to the estimate of the second term $\mathrm{II}$. It is easy to see that
\begin{equation*}
\begin{split}
\mathrm{II}&\leq\Big[\underset{y\in \mathcal{B}}{\mathrm{ess\,sup}}\,\omega(y)\Big]
\cdot\frac{1}{m(\mathcal{B})^{\gamma/n}}\\
&\times\bigg(\frac{1}{m(\mathcal{B})}\int_{\mathcal{B}}\bigg[\frac{1}{m(\mathcal{B})}
\int_{\mathcal{B}}\big|T_{\Omega,\alpha}f_2(x)-T_{\Omega,\alpha}f_2(y)\big|\,dy\bigg]^{\ell}dx\bigg)^{1/{\ell}}\\
\end{split}
\end{equation*}
\begin{equation*}
\begin{split}
&\leq\Big[\underset{y\in \mathcal{B}}{\mathrm{ess\,sup}}\,\omega(y)\Big]\cdot\frac{1}{m(\mathcal{B})^{\gamma/n}}\\
&\times\bigg(\frac{1}{m(\mathcal{B})}\int_{\mathcal{B}}
\bigg[\frac{1}{m(\mathcal{B})}\int_{\mathcal{B}}\bigg\{\sum_{j=2}^\infty\int_{2^{j+1}\mathcal{B}\backslash 2^j\mathcal{B}}
\left|\frac{\Omega(x-z)}{|x-z|^{n-\alpha}}-\frac{\Omega(y-z)}{|y-z|^{n-\alpha}}\right|\cdot|f(z)|\,dz\bigg\}dy\bigg]^{\ell}dx\bigg)^{1/{\ell}}.
\end{split}
\end{equation*}
Applying H\"{o}lder's inequality and Lemma \ref{lem3}, we obtain that for any $x,y\in \mathcal{B}$,
\begin{equation*}
\begin{split}
&\int_{2^{j+1}\mathcal{B}\backslash 2^j\mathcal{B}}
\left|\frac{\Omega(x-z)}{|x-z|^{n-\alpha}}-\frac{\Omega(y-z)}{|y-z|^{n-\alpha}}\right|\cdot|f(z)|\,dz\\
&\leq\bigg(\int_{2^{j+1}\mathcal{B}\backslash 2^j\mathcal{B}}
\left|\frac{\Omega(x-z)}{|x-z|^{n-\alpha}}-\frac{\Omega(y-z)}{|y-z|^{n-\alpha}}\right|^sdz\bigg)^{1/s}
\cdot\bigg(\int_{2^{j+1}\mathcal{B}\backslash 2^j\mathcal{B}}|f(z)|^{s'}dz\bigg)^{1/{s'}}\\
&\leq C\cdot\big(2^jr\big)^{n/s-(n-\alpha)}
\bigg[\frac{1}{2^{j-1}}+\frac{1}{2^{j\beta}}
\int_{|x-x_0|/{(2^{j+1}r)}}^{|x-x_0|/{(2^jr)}}\frac{\omega_s(\delta)}{\delta^{1+\beta}}d\delta
+\frac{1}{2^{j\beta}}
\int_{|y-x_0|/{(2^{j+1}r)}}^{|y-x_0|/{(2^jr)}}\frac{\omega_s(\delta)}{\delta^{1+\beta}}d\delta\bigg]\\
&\times\bigg(\int_{2^{j+1}\mathcal{B}}|f(z)|^{s'}dz\bigg)^{1/{s'}},
\end{split}
\end{equation*}
for each fixed integer $j\geq2$. On the other hand, it follows from H\"{o}lder's inequality with exponent $p/{s'}>1$ and the condition $\omega^{s'}\in A(p/{s'},\infty)$ that
\begin{equation*}
\begin{split}
&\bigg(\int_{2^{j+1}\mathcal{B}}|f(z)|^{s'}dz\bigg)^{1/{s'}}
=\bigg(\int_{2^{j+1}\mathcal{B}}|f(z)|^{s'}\omega(z)^{s'}\cdot \omega(z)^{-s'}dz\bigg)^{1/{s'}}\\
&\leq\bigg(\int_{2^{j+1}\mathcal{B}}|f(z)|^{p}\omega(z)^{p}dz\bigg)^{1/p}
\cdot\bigg(\int_{2^{j+1}\mathcal{B}}\omega^{s'}(z)^{-(p/{s'})'}dz\bigg)^{\frac{1}{s'(p/{s'})'}}\\
&\leq C\|f\|_{L^p(\omega^p)}m(2^{j+1}\mathcal{B})^{\frac{1}{s'(p/{s'})'}}
\cdot\Big[\underset{z\in 2^{j+1}\mathcal{B}}{\mathrm{ess\,sup}}\,\omega^{s'}(z)\Big]^{-\frac{1}{s'}}\\
&=C\|f\|_{L^p(\omega^p)}m(2^{j+1}\mathcal{B})^{\frac{1}{s'}-\frac{\,1\,}{p}}
\cdot\Big[\underset{z\in 2^{j+1}\mathcal{B}}{\mathrm{ess\,sup}}\,\omega(z)\Big]^{-1},
\end{split}
\end{equation*}
where in the last step we have used the equation \eqref{cal4}. Consequently, for any $x,y\in \mathcal{B}$, we deduce that
\begin{equation*}
\begin{split}
&\sum_{j=2}^\infty\int_{2^{j+1}\mathcal{B}\backslash 2^j\mathcal{B}}
\left|\frac{\Omega(x-z)}{|x-z|^{n-\alpha}}-\frac{\Omega(y-z)}{|y-z|^{n-\alpha}}\right|\cdot|f(z)|\,dz\\
&\lesssim\|f\|_{L^p(\omega^p)}\sum_{j=2}^\infty m(2^{j+1}\mathcal{B})^{\frac{\,1\,}{s}-(1-\frac{\alpha}{n})}\cdot m(2^{j+1}\mathcal{B})^{\frac{1}{s'}-\frac{\,1\,}{p}}
\cdot\Big[\underset{z\in 2^{j+1}\mathcal{B}}{\mathrm{ess\,sup}}\,\omega(z)\Big]^{-1}\\
&\times\bigg[\frac{1}{2^{j-1}}+\frac{1}{2^{j\beta}}
\int_{|x-x_0|/{(2^{j+1}r)}}^{|x-x_0|/{(2^jr)}}\frac{\omega_s(\delta)}{\delta^{1+\beta}}d\delta
+\frac{1}{2^{j\beta}}
\int_{|y-x_0|/{(2^{j+1}r)}}^{|y-x_0|/{(2^jr)}}\frac{\omega_s(\delta)}{\delta^{1+\beta}}d\delta\bigg]\\
&=\|f\|_{L^p(\omega^p)}\sum_{j=2}^\infty m(2^{j+1}\mathcal{B})^{{\gamma}/{n}}
\cdot\Big[\underset{z\in 2^{j+1}\mathcal{B}}{\mathrm{ess\,sup}}\,\omega(z)\Big]^{-1}\\
&\times\bigg[\frac{1}{2^{j-1}}+\frac{1}{2^{j\beta}}
\int_{|x-x_0|/{(2^{j+1}r)}}^{|x-x_0|/{(2^jr)}}\frac{\omega_s(\delta)}{\delta^{1+\beta}}d\delta
+\frac{1}{2^{j\beta}}
\int_{|y-x_0|/{(2^{j+1}r)}}^{|y-x_0|/{(2^jr)}}\frac{\omega_s(\delta)}{\delta^{1+\beta}}d\delta\bigg],
\end{split}
\end{equation*}
where in the last step we have used the fact that $\gamma/n=\alpha/n-1/p$. Therefore, we obtain
\begin{equation*}
\begin{split}
\mathrm{II}&\lesssim\|f\|_{L^p(\omega^p)}\Big[\underset{z\in \mathcal{B}}{\mathrm{ess\,sup}}\,\omega(z)\Big]
\sum_{j=2}^\infty\frac{m(2^{j+1}\mathcal{B})^{{\gamma}/{n}}}{m(\mathcal{B})^{\gamma/n}}
\cdot\Big[\underset{z\in 2^{j+1}\mathcal{B}}{\mathrm{ess\,sup}}\,\omega(z)\Big]^{-1}\\
&\times\bigg[\frac{1}{2^{j-1}}+\frac{1}{2^{j\beta}}
\int_{|x-x_0|/{(2^{j+1}r)}}^{|x-x_0|/{(2^jr)}}\frac{\omega_s(\delta)}{\delta^{1+\beta}}d\delta
+\frac{1}{2^{j\beta}}
\int_{|y-x_0|/{(2^{j+1}r)}}^{|y-x_0|/{(2^jr)}}\frac{\omega_s(\delta)}{\delta^{1+\beta}}d\delta\bigg]\\
&\lesssim\|f\|_{L^p(\omega^p)}\sum_{j=2}^\infty\bigg[\frac{1}{2^{j(1-\gamma)}}+\frac{1}{2^{j(\beta-\gamma)}}
\int_{0}^{1}\frac{\omega_s(\delta)}{\delta^{1+\beta}}d\delta
+\frac{1}{2^{j(\beta-\gamma)}}
\int_{0}^{1}\frac{\omega_s(\delta)}{\delta^{1+\beta}}d\delta\bigg]\\
&\leq C\|f\|_{L^p(\omega^p)}\bigg[1+2\int_0^1\frac{\omega_s(\delta)}{\delta^{1+\beta}}d\delta\bigg],
\end{split}
\end{equation*}
where the last inequality follows from the fact that $\gamma<\beta\leq1$. Combining the above estimates for $\mathrm{I}$ and $\mathrm{II}$ leads to the estimate \eqref{mainw}. By taking the supremum over all balls $\mathcal{B}\subset\mathbb R^n$, we conclude the proof of Theorem \ref{thm1}.
\end{proof}

\section{Proof of Theorem $\ref{thm2}$}
\begin{proof}[Proof of Theorem $\ref{thm2}$]
Let $f\in\mathcal{M}^{p,\kappa}(\omega^p,\omega^q)$ with $s'\leq p<n/{\alpha}$, $1/q=1/p-{\alpha}/n$ and $\omega^{s'}\in A(p/{s'},q/{s'})$. By definition, it suffices to prove that for any ball $\mathcal{B}=B(x_0,r)$ with $(x_0,r)\in\mathbb R^n\times(0,\infty)$ and $f\in\mathcal{M}^{p,\kappa}(\omega^p,\omega^q)$, the following estimate holds:
\begin{equation}\label{mainwh}
\frac{1}{\omega^q(\mathcal{B})^{{\gamma_*}/n}}
\bigg(\frac{1}{m(\mathcal{B})}\int_{\mathcal{B}}\big|T_{\Omega,\alpha}f(x)-(T_{\Omega,\alpha}f)_{\mathcal{B}}\big|^{\ell}\,dx\bigg)^{1/{\ell}}
\lesssim\|f\|_{\mathcal{M}^{p,\kappa}(\omega^p,\omega^q)}.
\end{equation}
To this end, as in the proof of Theorem \ref{thm1}, we again decompose $f$ as
\begin{equation*}
f(x)=f(x)\cdot\chi_{4\mathcal{B}}(x)+f(x)\cdot\chi_{(4\mathcal{B})^{\complement}}(x):=f_1(x)+f_2(x).
\end{equation*}
For $\ell\geq1$, by using Minkowski's inequality, we can write
\begin{equation*}
\begin{split}
&\frac{1}{\omega^q(\mathcal{B})^{{\gamma_*}/n}}
\bigg(\frac{1}{m(\mathcal{B})}\int_{\mathcal{B}}\big|T_{\Omega,\alpha}f(x)-(T_{\Omega,\alpha}f)_{\mathcal{B}}\big|^{\ell}\,dx\bigg)^{1/{\ell}}\\
&\leq \frac{1}{\omega^q(\mathcal{B})^{{\gamma_*}/n}}
\bigg(\frac{1}{m(\mathcal{B})}\int_{\mathcal{B}}\big|T_{\Omega,\alpha}f_1(x)-(T_{\Omega,\alpha}f_1)_{\mathcal{B}}\big|^{\ell}\,dx\bigg)^{1/{\ell}}\\
&+\frac{1}{\omega^q(\mathcal{B})^{{\gamma_*}/n}}\bigg(\frac{1}{m(\mathcal{B})}
\int_{\mathcal{B}}\big|T_{\Omega,\alpha}f_2(x)-(T_{\Omega,\alpha}f_2)_{\mathcal{B}}\big|^{\ell}\,dx\bigg)^{1/{\ell}}\\
&:=\mathrm{III+IV}.
\end{split}
\end{equation*}
First let us consider the term III. Applying the estimates \eqref{doub} and \eqref{cal2} together with Minkowski's integral inequality, we can deduce that
\begin{equation*}
\begin{split}
\mathrm{III}&\leq\frac{2}{\omega^q(\mathcal{B})^{{\gamma_*}/n}}
\bigg(\frac{1}{m(\mathcal{B})}\int_{\mathcal{B}}|T_{\Omega,\alpha}f_1(x)|^{\ell}\,dx\bigg)^{1/{\ell}}\\
&\leq\frac{2}{\omega^q(\mathcal{B})^{{\gamma_*}/n}}
\bigg(\frac{1}{m(\mathcal{B})}\int_{\mathcal{B}}\bigg[\int_{4\mathcal{B}}\frac{|\Omega(x-y)|}{|x-y|^{n-\alpha}}|f(y)|\,dy\bigg]^{\ell}dx\bigg)^{1/{\ell}}\\
&\leq\frac{2}{\omega^q(\mathcal{B})^{{\gamma_*}/n}}\cdot\frac{1}{m(\mathcal{B})^{1/{\ell}}}
\int_{4\mathcal{B}}|f(y)|\bigg(\int_{|x-y|<5r}\frac{|\Omega(x-y)|^{\ell}}{|x-y|^{(n-\alpha)\ell}}\,dx\bigg)^{1/{\ell}}dy\\
&\leq C\|\Omega\|_{L^s(\mathbb{S}^{n-1})}
\frac{1}{\omega^q(B)^{{\gamma_*}/n}}\cdot\frac{1}{m(\mathcal{B})^{1-\alpha/n}}\int_{4\mathcal{B}}|f(y)|\,dy.
\end{split}
\end{equation*}
On the other hand, by using H\"older's inequality, we obtain
\begin{equation}\label{cal22}
\begin{split}
&\int_{4\mathcal{B}}|f(y)|\,dy=\int_{4\mathcal{B}}|f(y)|\omega(y)\cdot\omega(y)^{-1}\,dy\\
&\leq\bigg(\int_{4\mathcal{B}}|f(y)|^p\omega(y)^p\,dy\bigg)^{1/p}
\cdot\bigg(\int_{4\mathcal{B}} \omega(y)^{-p'}dy\bigg)^{1/{p'}}.
\end{split}
\end{equation}
Notice that
\begin{equation}
s'\cdot\Big(\frac{p}{s'}\Big)'=\frac{s'}{1-\frac{s'}{p}}=\frac{ps'}{p-s'}\geq p'.
\end{equation}
This fact, together with H\"older's inequality and the condition $\omega^{s'}\in A(p/{s'},q/{s'})$, implies
\begin{equation}\label{cal23}
\begin{split}
\bigg(\frac{1}{m(4\mathcal{B})}\int_{4\mathcal{B}}\omega(y)^{-p'}dy\bigg)^{1/{p'}}
&\leq\bigg(\frac{1}{m(4\mathcal{B})}\int_{4\mathcal{B}} \omega(y)^{-s'(p/{s'})'}dy\bigg)^{\frac{1}{s'(p/{s'})'}}\\
&=\bigg(\frac{1}{m(4\mathcal{B})}\int_{4\mathcal{B}}\omega^{s'}(y)^{-(p/{s'})'}dy\bigg)^{\frac{1}{s'(p/{s'})'}}\\
&\leq C\bigg(\frac{1}{m(4\mathcal{B})}\int_{4\mathcal{B}}\omega^{s'}(y)^{{q}/{s'}}dy\bigg)^{-\frac{s'}{q}\cdot\frac{1}{s'}}\\
&=C\bigg(\frac{1}{m(4\mathcal{B})}\int_{4\mathcal{B}}\omega(y)^{q}dy\bigg)^{-1/{q}}.
\end{split}
\end{equation}
A direct calculation shows that
\begin{equation}\label{cal24}
\frac{1}{p'}+\frac{\,1\,}{q}=1-\frac{\,1\,}{p}+\frac{\,1\,}{q}=1-\frac{\alpha}{\,n\,}.
\end{equation}
In addition, since $\omega^{s'}\in A(p/{s'},q/{s'})$ with $s'\leq p<n/{\alpha}$ and $1/q=1/p-{\alpha}/n$, by Lemma \ref{lem5}, we have $\omega^q\in A_{\tau}$, where
\begin{equation}\label{samething}
\tau=1+\frac{q/{s'}}{(p/{s'})'}, ~\mbox{if}~ p>s',\quad\&\quad \tau=1, ~\mbox{if}~ p=s'.
\end{equation}
Hence, from \eqref{cal22}, \eqref{cal23} and \eqref{cal24}, it then follows that
\begin{equation*}
\begin{split}
\mathrm{III}&\leq C\|\Omega\|_{L^s(\mathbb{S}^{n-1})}
\frac{1}{\omega^q(\mathcal{B})^{{\gamma_*}/n}}\cdot\frac{m(4\mathcal{B})^{\frac{1}{p'}+\frac{1}{q}}}{m(\mathcal{B})^{1-\frac{\alpha}{n}}}\\
&\times\bigg(\int_{4\mathcal{B}}|f(y)|^p\omega(y)^pdy\bigg)^{1/p}\bigg(\int_{4\mathcal{B}}\omega(y)^{q}dy\bigg)^{-1/{q}}\\
&\leq C\|\Omega\|_{L^s(\mathbb{S}^{n-1})}\cdot
\frac{\omega^q(4\mathcal{B})^{\kappa/p-1/q}}{\omega^q(\mathcal{B})^{{\gamma_*}/n}}\|f\|_{\mathcal{M}^{p,\kappa}(\omega^p,\omega^q)}.
\end{split}
\end{equation*}
Note that ${\gamma_*}/n=\kappa/p-1/q$. Therefore, by part (1) of Lemma \ref{lem4}, we have
\begin{equation*}
\mathrm{III}\lesssim\|\Omega\|_{L^s(\mathbb{S}^{n-1})}\cdot
\frac{\omega^q(4\mathcal{B})^{{\gamma_*}/n}}{\omega^q(\mathcal{B})^{{\gamma_*}/n}}\|f\|_{\mathcal{M}^{p,\kappa}(\omega^p,\omega^q)}
\lesssim\|\Omega\|_{L^s(\mathbb{S}^{n-1})}\|f\|_{\mathcal{M}^{p,\kappa}(\omega^p,\omega^q)},
\end{equation*}
as desired. Now, let us turn to the estimate of the term IV. It is easy to see that
\begin{equation*}
\begin{split}
\mathrm{IV}
&\leq\frac{1}{\omega^q(\mathcal{B})^{{\gamma_*}/n}}\bigg(\frac{1}{m(\mathcal{B})}\int_{\mathcal{B}}
\bigg[\frac{1}{m(\mathcal{B})}\int_{\mathcal{B}}\big|T_{\Omega,\alpha}f_2(x)-T_{\Omega,\alpha}f_2(y)\big|\,dy\bigg]^{\ell}dx\bigg)^{1/{\ell}}\\
&\leq\frac{1}{\omega^q(\mathcal{B})^{{\gamma_*}/n}}\bigg(\frac{1}{m(\mathcal{B})}\int_{\mathcal{B}}\bigg[\frac{1}{m(\mathcal{B})}\int_{\mathcal{B}}\\
&\times\bigg\{\sum_{j=2}^\infty\int_{2^{j+1}\mathcal{B}\backslash 2^j\mathcal{B}}
\left|\frac{\Omega(x-z)}{|x-z|^{n-\alpha}}-\frac{\Omega(y-z)}{|y-z|^{n-\alpha}}\right|\cdot|f(z)|\,dz\bigg\}dy\bigg]^{\ell}dx\bigg)^{1/{\ell}}.
\end{split}
\end{equation*}
For each fixed $j\geq2$, it follows from H\"older's inequality with exponent $p/{s'}>1$ and the condition $\omega^{s'}\in A(p/{s'},q/{s'})$ that
\begin{equation*}
\begin{split}
&\bigg(\int_{2^{j+1}\mathcal{B}}|f(z)|^{s'}dz\bigg)^{1/{s'}}
=\bigg(\int_{2^{j+1}\mathcal{B}}|f(z)|^{s'}\omega(z)^{s'}\cdot \omega(z)^{-s'}dz\bigg)^{1/{s'}}\\
&\leq\bigg(\int_{2^{j+1}\mathcal{B}}|f(z)|^{p}\omega(z)^{p}dz\bigg)^{1/p}
\cdot\bigg(\int_{2^{j+1}\mathcal{B}}\omega^{s'}(z)^{-(p/{s'})'}dz\bigg)^{\frac{1}{s'(p/{s'})'}}\\
&\leq C\bigg(\int_{2^{j+1}\mathcal{B}}|f(z)|^{p}\omega(z)^{p}dz\bigg)^{1/p}\cdot m(2^{j+1}\mathcal{B})^{\frac{1}{s'(p/{s'})'}}
\bigg(\frac{1}{m(2^{j+1}\mathcal{B})}\int_{2^{j+1}\mathcal{B}}\omega^{s'}(z)^{{q}/{s'}}dz\bigg)^{-\frac{s'}{q}\cdot\frac{1}{s'}}\\
&= C\bigg(\int_{2^{j+1}\mathcal{B}}|f(z)|^{p}\omega(z)^{p}dz\bigg)^{1/p}
\cdot m(2^{j+1}\mathcal{B})^{\frac{1}{s'(p/{s'})'}+\frac{\,1\,}{q}}
\bigg(\int_{2^{j+1}\mathcal{B}}\omega(z)^{q}dz\bigg)^{-1/{q}}.
\end{split}
\end{equation*}
While for the case $s'=p$, from the condition $\omega^{p}\in A(1,q/{p})$, it follows that
\begin{equation*}
\begin{split}
&\bigg(\int_{2^{j+1}\mathcal{B}}|f(z)|^{s'}dz\bigg)^{1/{s'}}
=\bigg(\int_{2^{j+1}\mathcal{B}}|f(z)|^{p}\omega(z)^{p}\cdot \omega(z)^{-p}dz\bigg)^{1/{p}}\\
&\leq \bigg(\int_{2^{j+1}\mathcal{B}}|f(z)|^{p}\omega(z)^{p}dz\bigg)^{1/p}
\cdot\bigg[\underset{z\in 2^{j+1}\mathcal{B}}{\mathrm{ess\,sup}}\,\frac{1}{\omega^p(z)}\bigg]^{1/p}\\
&\leq C\bigg(\int_{2^{j+1}\mathcal{B}}|f(z)|^{p}\omega(z)^{p}dz\bigg)^{1/p}
\cdot m(2^{j+1}\mathcal{B})^{\frac{\,1\,}{q}}
\bigg(\int_{2^{j+1}\mathcal{B}}\omega(z)^{q}dz\bigg)^{-1/{q}}.
\end{split}
\end{equation*}
A direct computation gives us that
\begin{equation*}
\frac{1}{s'(p/{s'})'}+\frac{\,1\,}{q}=\frac{1}{s'}-\frac{\,1\,}{p}+\frac{\,1\,}{q}=-\frac{\,1\,}{s}+1-\frac{\alpha}{\,n\,},
\end{equation*}
and
\begin{equation*}
\frac{\,1\,}{q}=\frac{\,1\,}{p}-\frac{\alpha}{\,n\,}=-\frac{\,1\,}{s}+1-\frac{\alpha}{\,n\,},~~\mbox{when}~ s'=p.
\end{equation*}
Hence, we have
\begin{equation}\label{cal26}
\begin{split}
&\bigg(\int_{2^{j+1}\mathcal{B}}|f(z)|^{s'}dz\bigg)^{1/{s'}}\\
&\leq C\bigg(\int_{2^{j+1}\mathcal{B}}|f(z)|^{p}\omega(z)^{p}dz\bigg)^{1/p}
\cdot\frac{m(2^{j+1}\mathcal{B})^{-1/s+1-\alpha/n}}{\omega^q(2^{j+1}\mathcal{B})^{1/q}}  \\
&\leq C\|f\|_{\mathcal{M}^{p,\kappa}(\omega^p,\omega^q)}\omega^q(2^{j+1}\mathcal{B})^{\kappa/p-1/q}
\cdot m(2^{j+1}\mathcal{B})^{-1/s+1-\alpha/n}.
\end{split}
\end{equation}
Applying H\"{o}lder's inequality and Lemma \ref{lem3} together with \eqref{cal26}, we obtain that for any $x,y\in \mathcal{B}$,
\begin{equation*}
\begin{split}
&\int_{2^{j+1}\mathcal{B}\backslash 2^j\mathcal{B}}
\left|\frac{\Omega(x-z)}{|x-z|^{n-\alpha}}-\frac{\Omega(y-z)}{|y-z|^{n-\alpha}}\right|\cdot|f(z)|\,dz\\
&\leq\bigg(\int_{2^{j+1}\mathcal{B}\backslash 2^j\mathcal{B}}
\left|\frac{\Omega(x-z)}{|x-z|^{n-\alpha}}-\frac{\Omega(y-z)}{|y-z|^{n-\alpha}}\right|^sdz\bigg)^{1/s}
\cdot\bigg(\int_{2^{j+1}\mathcal{B}\backslash 2^j\mathcal{B}}|f(z)|^{s'}dz\bigg)^{1/{s'}}\\
&\leq C\cdot\big(2^jr\big)^{n/s-(n-\alpha)}
\bigg[\frac{1}{2^{j-1}}+\frac{1}{2^{j\beta}}
\int_{|x-x_0|/{(2^{j+1}r)}}^{|x-x_0|/{(2^jr)}}\frac{\omega_s(\delta)}{\delta^{1+\beta}}d\delta
+\frac{1}{2^{j\beta}}
\int_{|y-x_0|/{(2^{j+1}r)}}^{|y-x_0|/{(2^jr)}}\frac{\omega_s(\delta)}{\delta^{1+\beta}}d\delta\bigg]\\
&\times\bigg(\int_{2^{j+1}\mathcal{B}}|f(z)|^{s'}dz\bigg)^{1/{s'}}\\
&\leq C\|f\|_{\mathcal{M}^{p,\kappa}(\omega^p,\omega^q)}\omega^q(2^{j+1}\mathcal{B})^{\kappa/p-1/q}\\
&\times\bigg[\frac{1}{2^{j-1}}+\frac{1}{2^{j\beta}}
\int_{|x-x_0|/{(2^{j+1}r)}}^{|x-x_0|/{(2^jr)}}\frac{\omega_s(\delta)}{\delta^{1+\beta}}d\delta
+\frac{1}{2^{j\beta}}
\int_{|y-x_0|/{(2^{j+1}r)}}^{|y-x_0|/{(2^jr)}}\frac{\omega_s(\delta)}{\delta^{1+\beta}}d\delta\bigg].
\end{split}
\end{equation*}
Notice that $\kappa/p-1/q={\gamma_*}/n$. By part (2) of Lemma \ref{lem4}, we get
\begin{equation*}
\frac{\omega^q(2^{j+1}\mathcal{B})^{\kappa/p-1/q}}{\omega^q(\mathcal{B})^{{\gamma_*}/n}}
=\frac{\omega^q(2^{j+1}\mathcal{B})^{{\gamma_*}/{n}}}{\omega^q(\mathcal{B})^{{\gamma_*}/n}}
\leq C(2^{j+1})^{\tau\gamma_{*}},
\end{equation*}
where the number $\tau$ is the same as in \eqref{samething}. Therefore,
\begin{equation*}
\begin{split}
\mathrm{IV}&\lesssim\|f\|_{\mathcal{M}^{p,\kappa}(\omega^p,\omega^q)}
\sum_{j=2}^\infty
\frac{\omega^q(2^{j+1}\mathcal{B})^{{\gamma_*}/{n}}}{\omega^q(\mathcal{B})^{{\gamma_*}/n}}\\
&\times\bigg[\frac{1}{2^{j-1}}+\frac{1}{2^{j\beta}}
\int_{|x-x_0|/{(2^{j+1}r)}}^{|x-x_0|/{(2^jr)}}\frac{\omega_s(\delta)}{\delta^{1+\beta}}d\delta
+\frac{1}{2^{j\beta}}
\int_{|y-x_0|/{(2^{j+1}r)}}^{|y-x_0|/{(2^jr)}}\frac{\omega_s(\delta)}{\delta^{1+\beta}}d\delta\bigg]\\
&\lesssim\|f\|_{\mathcal{M}^{p,\kappa}(\omega^p,\omega^q)}\sum_{j=2}^\infty
\bigg[\frac{1}{2^{j(1-\tau\gamma_{*})}}+\frac{1}{2^{j(\beta-\tau\gamma_{*})}}
\int_{0}^{1}\frac{\omega_s(\delta)}{\delta^{1+\beta}}d\delta
+\frac{1}{2^{j(\beta-\tau\gamma_{*})}}
\int_{0}^{1}\frac{\omega_s(\delta)}{\delta^{1+\beta}}d\delta\bigg]\\
&\leq C\|f\|_{\mathcal{M}^{p,\kappa}(\omega^p,\omega^q)}\bigg[1+2\int_0^1\frac{\omega_s(\delta)}{\delta^{1+\beta}}d\delta\bigg],
\end{split}
\end{equation*}
where in the last step we have used the fact that $\tau\cdot\gamma_{*}<\beta\leq1$. Combining the above estimates for $\mathrm{III}$ and $\mathrm{IV}$ leads to the estimate \eqref{mainwh}. Finally, by taking the supremum over all balls $\mathcal{B}\subset\mathbb R^n$, we complete the proof of Theorem $\ref{thm2}$.
\end{proof}

\section*{Acknowledgments}
The author was supported by a grant from Xinjiang University under the project ``Real-Variable Theory of Function Spaces and Its Applications". This work was supported by the Natural Science Foundation of China (Grant No. XJEDU2020Y002 and 2022D01C407).

\bibliographystyle{elsarticle-harv}
\bibliography{<your bibdatabase>}

\begin{thebibliography}{99}

\bibitem{adams}
D. R. Adams, \emph{A note on Riesz potentials}, Duke Math. J, \textbf{42}(1975), 765--778.
\bibitem{adams1}
D. R. Adams, \emph{Morrey Spaces}, Lecture notes in applied and numerical harmonic analysis, Birkh\"{a}user/Springer, Cham, 2015.
\bibitem{ding1}
Y. Ding and S. Z. Lu, \emph{Weighted norm inequalities for fractional integral operators with rough kernel}, Canad. J. Math, \textbf{50}(1998), 29--39.
\bibitem{ding2}
Y. Ding and S. Z. Lu, \emph{Boundedness of homogeneous fractional integrals on $L^p$ for $n/{\alpha}\leq p\leq\infty$}, Nagoya Math. J., \textbf{167}(2002), 17--33.
\bibitem{wang2}
J. L. Du and H. Wang, \emph{A new estimate for homogeneous fractional integral operators on weighted Morrey spaces}, Real Analysis Exchange, \textbf{50}(2025), 71--86.
\bibitem{duoand}
J. Duoandikoetxea, \emph{Fourier Analysis}, American Mathematical Society, Providence, Rhode Island, 2000.
\bibitem{grafakos}
L. Grafakos, \emph{Classical Fourier Analysis}, Third Edition, Springer-Verlag, 2014.
\bibitem{john}
F. John and L. Nirenberg, \emph{On functions of bounded mean oscillation}, Comm. Pure Appl. Math, \textbf{14}(1961), 415--426.
\bibitem{komori}
Y. Komori and S. Shirai, \emph{Weighted Morrey spaces and a singular integral operator}, Math. Nachr, \textbf{282}(2009), 219--231.
\bibitem{lu}
S. Z. Lu, Y. Ding and D. Y. Yan, \emph{Singular Integrals and Related Topics}, World Scientific Publishing, NJ, 2007.
\bibitem{lu2}
S. Z. Lu, D. C. Yang and Z. S. Zhou, \emph{Sublinear operators with rough kernel on generalized Morrey spaces}, Hokkaido Math. J, \textbf{27}(1998), 219--232.
\bibitem{meng}
S. Y. Meng and Y. P. Chen, \emph{Boundedness of homogeneous fractional integral operator on Morrey space}, J. Inequal. Appl., 2016, Paper No. 61, 11 pp.
\bibitem{morrey}
C. B. Morrey, \emph{On the solutions of quasi-linear elliptic partial differential equations}, Trans. Amer. Math. Soc., \textbf{43}(1938), 126--166.
\bibitem{muckenhoupt1}
B. Muckenhoupt and R. L. Wheeden, \emph{Weighted norm inequalities for singular and fractional integrals}, Trans. Amer. Math. Soc, \textbf{161}(1971), 249--258.
\bibitem{muckenhoupt2}
B. Muckenhoupt and R. L. Wheeden, \emph{Weighted norm inequalities for fractional integrals}, Trans. Amer. Math. Soc, \textbf{192}(1974), 261--274.
\bibitem{stein}
E. M. Stein, \emph{Singular Integrals and Differentiability Properties of Functions}, Princeton Univ. Press, Princeton, New Jersey, 1970.
\bibitem{wang}
H. Wang, \emph{Boundedness of fractional integral operators with rough kernels on weighted Morrey spaces}, Acta Math. Sinica (Chin. Ser),  \textbf{56}(2013), 175--186.
\bibitem{wang3}
K. K. Yang and H. Wang, \emph{Homogeneous fractional integral operators on Lebesgue and Morrey spaces}, Preprint.

\end{thebibliography}
\begin{center}
References
\end{center}

\end{document}